\renewcommand{\uppercasenonmath}[1]{}
\numberwithin{equation}{section} \theoremstyle{plain}
\newtheorem*{thm*}{Main Theorem}
\newtheorem{thm}{Theorem}[section]
\newtheorem{cor}[thm]{Corollary}
\newtheorem*{cor*}{Corollary}
\newtheorem{lem}[thm]{Lemma}
\newtheorem*{lem*}{Lemma}
\newtheorem{fact}[thm]{Fact}
\newtheorem*{fact*}{Fact}
\newtheorem*{nota*}{Notation}
\newtheorem{prop}[thm]{Proposition}
\newtheorem*{prop*}{Proposition}
\newtheorem*{rem*}{Remark}
\newtheorem*{observation*}{Observation}
\newtheorem*{exa*}{Example}
\newtheorem*{df*}{Definition}
\newtheorem*{conj*}{Conjecture}
\renewcommand{\geq}{\geqslant}
\renewcommand{\leq}{\leqslant}
\begin{document}
\begin{center}
{\large  \bf Coartinianess of local homology modules for ideals of small dimension}

\vspace{0.3cm} Jingwen Shen, Pinger Zhang, Xiaoyan Yang \\
%\bigskip
Department of Mathematics, Northwest Normal University, Lanzhou 730070,
China\\
E-mails: shenjw0609@163.com, 275661482@qq.com, yangxy@nwnu.edu.cn
\end{center}
\bigskip
\centerline { \bf  Abstract} Let $\mathfrak{a}$ be an ideal of a commutative noetherian ring $R$ and $M$ an $R$-module with Cosupport in $\mathrm{V}(\mathfrak{a})$. We show that $M$ is $\mathfrak{a}$-coartinian if and only if $\mathrm{Ext}_{R}^{i}(R/\mathfrak{a},M)$ is artinian for all $0\leq i\leq \mathrm{cd}(\mathfrak{a},M)$, which provides a computable finitely many steps to examine $\mathfrak{a}$-coartinianness. We also consider the duality of Hartshorne's questions: for which rings $R$ and ideals $\mathfrak{a}$ are the modules $\mathrm{H}^{\mathfrak{a}}_{i}(M)$ $\mathfrak{a}$-coartinian for all $i\geq 0$; whether the category $\mathcal{C}(R,\mathfrak{a})_{coa}$ of $\mathfrak{a}$-coartinian modules is an Abelian subcategory of the category of all $R$-modules, and establish affirmative answers to these questions in the case $\mathrm{cd}(\mathfrak{a},R)\leq 1$ and $\mathrm{dim}R/\mathfrak{a}\leq 1$.
%\bigskip
\leftskip10truemm \rightskip10truemm \noindent \\
\vbox to 0.3cm{}\\
{\it Key Words:} coartinian module; local homology; semi-discrete linearly compact module\\
{\it 2020 Mathematics Subject Classification:} 13C15; 13H10

\leftskip0truemm \rightskip0truemm
\bigskip

\section{\bf Introduction and Preliminaries}

The theory of local cohomology has been developed rapidly for the last $\mathrm{60}$ years after it was introduced by Grothendieck. Let $R$ denote a commutative noetherian ring with identity and $\mathfrak{a}$ an ideal of $R$. For an $R$-module $M$, the $i$th local cohomology module of $M$ with respect to $\mathfrak{a}$ is defined as
\begin{center}$\begin{aligned}
\mathrm{H}^{i}_{\mathfrak{a}}(M)=\underrightarrow{\text{lim}}\mathrm{Ext}^{i}_{R}(R/\mathfrak{a}^{t},M).
\end{aligned}$\end{center}
Local homology as its duality was initiated by Matlis \cite{M4} in 1974. Denote $\Lambda^{\mathfrak{a}}(M)=\underleftarrow{\text{lim}}M/\mathfrak{a}^{t}M$ is the $\mathfrak{a}$-adic completion of $M$. Even if $R$ is noetherian, the $\mathfrak{a}$-adic functor $\Lambda^{\mathfrak{a}}(-)$ is neither left nor right exact on the category of all $R$-modules. Hence the computation of the left derived functors $L_{\bullet}^{\mathfrak{a}}(-)$ of the $\mathfrak{a}$-adic functor $\Lambda^{\mathfrak{a}}(-)$ is very difficult and so local homology is not known so much. Moreover, we recall the $i$th local homology module of $M$ is
\begin{center}$\begin{aligned}
\mathrm{H}_{i}^{\mathfrak{a}}(M)=\underleftarrow{\text{lim}}\mathrm{Tor}_{i}^{R}(R/\mathfrak{a}^{t},M).
\end{aligned}$\end{center}
Cuong and Nam \cite{CN1} showed that $\Lambda^{\mathfrak{a}}(-)$ has right exactness on the category of semi-discrete linearly compact $R$-modules, which says that $\Lambda^{\mathfrak{a}}(M)=\mathrm{H}_{0}^{\mathfrak{a}}(M)\cong L_{0}^{\mathfrak{a}}(M)$ and $\mathrm{H}_{i}^{\mathfrak{a}}(M)\cong L_{i}^{\mathfrak{a}}(M)$ for $i> 0$ when $M$ is a semi-discrete linearly compact $R$-module.

Let $(R,\mathfrak{m},k)$ be a complete local ring, $E$ an injective hull of $k$ over $R$. We recall the following various results due to Matlis and Grothendieck.

\begin{fact}\label{fact:1.1}\rm
For an $R$-module $N$, the following conditions are equivalent.

$\mathrm{(1)}$ $N$ is artinian.

$\mathrm{(2)}$ $N$ is isomorphic to a submodule of a finite direct sum of copies of $E$.

$\mathrm{(3)}$ There is a finitely generated $R$-module $M$ such that $N\cong \mathrm{Hom}_{R}(M,E)$.

$\mathrm{(4)}$ $\mathrm{Supp}_{R}N=\{\mathfrak{m}\}$ and $\mathrm{Hom}_{R}(k,N)$ is finitely generated.
\end{fact}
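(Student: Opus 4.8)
The plan is to route all four equivalences through Matlis duality. Write $(-)^{\vee}=\Hom_R(-,E)$; since $E$ is injective this functor is exact, and the two computations $R^{\vee}\cong E$ and $E^{\vee}=\Hom_R(E,E)\cong\comp{R}\cong R$ drive everything, the second being exactly where completeness of $R$ is used. I would open by recording the classical Matlis inputs that $E$ is artinian and that over a noetherian local ring $E=E_R(k)$ is the unique indecomposable injective with $\supp=\{\m\}$. From the exactness of $(-)^{\vee}$ and the two displayed computations I would also note, via a finite free presentation $R^{a}\to R^{b}\to M\to 0$ and the five lemma, that for finitely generated $M$ the biduality map $M\to M^{\vee\vee}$ is an isomorphism and $M^{\vee}$ is artinian (it embeds in $E^{b}$).

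With these in hand I would prove the cycle $(3)\Rightarrow(1)\Rightarrow(2)\Rightarrow(3)$. For $(3)\Rightarrow(1)$: if $N\cong\Hom_R(M,E)=M^{\vee}$ with $M$ finitely generated, then $N$ embeds in $E^{b}$ and is therefore artinian, $E$ being artinian. For $(1)\Rightarrow(2)$: an artinian $N$ has semisimple socle satisfying the descending chain condition, so $\soc N\cong k^{n}$ for a finite $n$; since $\soc N$ is essential in $N$, the injective hull of $N$ equals that of its socle, namely $E^{n}$, giving an embedding $N\hookrightarrow E^{n}$. For $(2)\Rightarrow(3)$: writing $Q$ for the cokernel of $N\hookrightarrow E^{n}$ and dualizing $0\to N\to E^{n}\to Q\to 0$ yields $0\to Q^{\vee}\to R^{n}\to N^{\vee}\to 0$, so $M:=N^{\vee}=\Hom_R(N,E)$ is a quotient of $R^{n}$, hence finitely generated; applying $(-)^{\vee}$ once more and comparing with the original sequence through the five lemma (using that biduality is an isomorphism on $E^{n}$) gives $N\cong N^{\vee\vee}=\Hom_R(M,E)$.

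It then remains to tie in $(4)$, which I would do by proving $(2)\Leftrightarrow(4)$. The implication $(2)\Rightarrow(4)$ is immediate, since $\supp E=\{\m\}$ and $\Hom_R(k,E^{n})\cong k^{n}$ is finitely generated, and both properties descend to submodules. For $(4)\Rightarrow(2)$ the first point is that $\supp_{R}N\subseteq\{\m\}$ forces $N$ to be $\m$-torsion: any $0\neq x\in N$ generates a cyclic submodule whose annihilator is $\m$-primary and hence contains a power of $\m$. Thus $\ass N=\{\m\}$, so the injective hull of $N$ is a direct sum of copies of $E$ indexed by a set of cardinality $\dim_{k}\Hom_R(k,N)$; the hypothesis that $\Hom_R(k,N)$ is finitely generated makes this number finite, giving $N\hookrightarrow E^{n}$. (One could equally phrase $(4)\Rightarrow(1)$ directly via Melkersson's criterion, reducing to the $\m$-torsion case and using finite-dimensionality of $\soc N=(0:_{N}\m)$ to make the ascending socle filtration have finite-length layers.)

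I expect the main obstacle, and the only place completeness is essential, to be the identification $\Hom_R(E,E)\cong R$: it is what makes $N^{\vee}$ finitely generated in $(2)\Rightarrow(3)$ and what supplies the reflexivity isomorphism $N\cong N^{\vee\vee}$. Over a non-complete local ring one only obtains $\Hom_R(E,E)\cong\comp{R}$, and condition $(3)$ would have to be read over the completion. The secondary delicate input is the structure theory of injective hulls over noetherian local rings—their decomposition into indecomposables indexed by associated primes and the uniqueness of $E$—upon which the socle bookkeeping in $(1)\Rightarrow(2)$ and $(4)\Rightarrow(2)$ rests; granting that, the remaining steps are routine.
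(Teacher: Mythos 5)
Your argument is correct, but there is nothing in the paper to compare it with: Fact~\ref{fact:1.1} is explicitly \emph{recalled} as a classical result of Matlis and Grothendieck and is given no proof in the text, so your Matlis-duality derivation is a self-contained reconstruction rather than a variant of the authors' argument. The route you take --- the cycle $(3)\Rightarrow(1)\Rightarrow(2)\Rightarrow(3)$ powered by exactness of $(-)^{\vee}=\Hom_R(-,E)$, the computations $R^{\vee}\cong E$ and $\Hom_R(E,E)\cong\comp{R}\cong R$, and the separate bridge $(2)\Leftrightarrow(4)$ via essentiality of the socle --- is the standard one and all the steps go through. Two small points deserve to be made explicit. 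First, in $(2)\Rightarrow(3)$ the five-lemma comparison of $0\to N\to E^{n}\to Q\to 0$ with its double dual needs not only that biduality is an isomorphism on $E^{n}$ but also that $Q\to Q^{\vee\vee}$ is injective; this holds because $E$ is an injective cogenerator over the local ring $R$ (every nonzero cyclic submodule maps onto $k\hookrightarrow E$), and you should say so, since otherwise the four-lemma only gives surjectivity of $N\to N^{\vee\vee}$ modulo that input. Second, as stated condition $(4)$ reads $\supp_RN=\{\m\}$, which excludes $N=0$ while $(1)$--$(3)$ do not; you silently (and reasonably) work with $\supp_RN\subseteq\{\m\}$, which is the intended reading. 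Your closing remarks correctly isolate where completeness enters: without it, $N^{\vee}$ in $(2)\Rightarrow(3)$ is only a finitely generated $\comp{R}$-module and $(3)$ must be interpreted over the completion.
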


Hartshorne \cite{H} called $N$ $\mathfrak{m}$-cofinite if it satisfies the equivalent conditions of the above fact, and the $\mathfrak{m}$-cofinite modules form an abelian subcategory of the category of all $R$-modules. For an ideal $\mathfrak{a}$ of $R$, Hartshorne defined an $R$-module $L$ to be $\mathfrak{a}$-cofinite, if $\mathrm{Supp}_{R}(L)\subseteq \mathrm{V}(\mathfrak{a})$ and $\mathrm{Ext}^{i}_{R}(R/\mathfrak{a},L)$ is finitely generated for all $i$, which is to generalize condition $\mathrm{(4)}$ of the fact. Cofinite module is an important tool in the studying of Huneke's problems on local cohomology. Hartshorne also posed two questions on cofinite modules:

{\bf Question 1.} \textit{Are the local cohomology modules $\mathrm{H}_{\mathfrak{a}}^{i}(M)$ $\mathfrak{a}$-cofinite for every finitely generated $R$-module $M$ and every $i\geq 0$}?

{\bf Question 2.} \textit{Whether the category $\mathcal{C}(R,\mathfrak{a})_{cof}$ of $\mathfrak{a}$-cofinite modules is an Abelian subcategory of the category of all $R$-modules}?

Question 1 and 2 have been high-profile among researchers. For example, see \cite{B, BN, C, DM, DFT, K, M2, MM, Y3}.

Combing Fact \ref{fact:1.1} with Matlis duality, it is clear when $(R,\mathfrak{m})$ is a complete local ring, an $R$-module $N$ is finitely generated is equivalent to $\mathrm{Cosupp}_{R}N\subseteq \{\mathfrak{m}\}$ and $N/\mathfrak{m}N$ is artinian. Hence Nam \cite{N} defined $\mathfrak{a}$-coartinian module which is in some sense dual to the concept of cofinite module. Recall that an $R$-module $M$ is $\mathfrak{a}$-coartinian if $\mathrm{Cosupp}_{R}M\subseteq \mathrm{V}(\mathfrak{a})$, and $\mathrm{Tor}_{i}^{R}(R/\mathfrak{a},M)$ is artinian for all $i\geq 0$.

The first aim of this paper is to give an equivalent characterization of coartinian modules. More precisely, we show that if $M$ is an $R$-module with $\mathrm{Cosupp}_{R}M\subseteq \mathrm{V}(\mathfrak{a})$, then $M$ is $\mathfrak{a}$-coartinian if and only if $\mathrm{Ext}_{R}^{i}(R/\mathfrak{a},M)$ is artinian for all $0\leq i\leq \mathrm{cd}(\mathfrak{a},M)$ in section 2.

Nam \cite{N1} posed a question on local homology: whether the number of coassociated primes of local homology modules $\mathrm{H}^{\mathfrak{a}}_{i}(M)$ always finite for every $i\geq 0$. One can easily observe that if the local homology $R$-module $\mathrm{H}_{i}^{\mathfrak{a}}(M)$ is $\mathfrak{a}$-coartinian, then the set $\mathrm{Coass}_{R}\mathrm{H}_{i}^{\mathfrak{a}}(M)$ and the Bass number $\mu_{\hat{R}}^{i}(\mathfrak{P},\mathrm{D}(\mathrm{H}_{i}^{\mathfrak{a}}(M)))$ are finite for any $\mathfrak{P}\in \mathrm{Spec}\hat{R}$, when $(R,\mathfrak{m})$ is local with $\mathrm{D}(-)=\mathrm{Hom}_{R}(-,E(R/\mathfrak{m}))$ and $\hat{R}$ is the $\mathfrak{m}$-adic completion of $R$.

In section 3, we consider the following two questions on coartinian modules which are dual to Hartshorne's questions:

{\bf Question $\mathbf{1}'$.} \textit{For which rings $R$ and ideals $\mathfrak{a}$ are the modules $\mathrm{H}^{\mathfrak{a}}_{i}(M)$ $\mathfrak{a}$-coartinian for all $i\geq 0$}?

{\bf Question $\mathbf{2}'$.} \textit{Whether the category of $\mathfrak{a}$-coartinian modules is an Abelian subcategory of the category of all $R$-modules}?

For the first question, we show that when $(R,\mathfrak{m})$ is a local ring with the $\mathfrak{m}$-adic topology and $M$ a semi-discrete linearly compact $R$-module with $\mathrm{mag}_{R}(M)\leq 1$, then $\mathrm{H}_{i}^{\mathfrak{a}}(M)$ is $\mathfrak{a}$-coartinian for all $i$. Subsequently, we answer Question $2'$ completely in the case $\mathrm{cd}(\mathfrak{a},R)\leq 1$ and $\mathrm{dim}R/\mathfrak{a}\leq 1$.

Next we recall some notions which will need later.

We write $\mathrm{Spec}R$ for the set of prime ideals of $R$, $\mathrm{Max}R$ for the set of maximal ideals of $R$. For an ideal $\mathfrak{a}$ of $R$, set

\begin{center}$\begin{aligned}
\mathrm{V}(\mathfrak{a})=\{\mathfrak{p}\in \mathrm{Spec}R\hspace{0.03cm}|\hspace{0.03cm}\mathfrak{a}\subseteq \mathfrak{p}\}.
\end{aligned}$\end{center}

Fix $\mathfrak{p}\in \mathrm{Spec}R$, let $M_{\mathfrak{p}}$ denote the localization of $M$ at $\mathfrak{p}$.

{\bf Associated prime and Coassociated prime.} Let $M$ be an $R$-module. The set of associated prime ideals of $M$ is denoted by $\mathrm{Ass}_{R}M$ and it is the set of primes $\mathfrak{p}$ such that there exists a cyclic submodule $N$ of $M$ with $\mathfrak{p}=\mathrm{Ann}_{R}N$.

Yassemi \cite{Y1} introduced the cocyclic modules. An $R$-module $L$ is cocyclic if $L$ is a submodule of $E(R/\mathfrak{m})$ for some $\mathfrak{m}\in \mathrm{Max}R$. A prime ideal $\mathfrak{p}$ of $R$ is called a coassociated prime of $R$-module $M$ if there exists a cocyclic homomorphic image $L$ of $M$ such that $\mathfrak{p}=\mathrm{Ann}_{R}L$, the annihilator of $L$. The set of coassociated prime ideals of $M$ is denoted by $\mathrm{Coass}_{R}M$.

{\bf Support and Cosupport.} The ``large" support of an $R$-module $M$ which is denoted by $\mathrm{Supp}_{R}M$ is defined as the set of prime ideals of $\mathfrak{p}$ such that there is a cyclic submodule $N$ of $M$ with $\mathrm{Ann}_{R}N\subseteq \mathfrak{p}$.

The cosupport of $M$ is defined as the set of prime ideals $\mathfrak{p}$ such that there is a cocyclic homomorphic image $L$ of $K$ with $\mathfrak{p}\supseteq \mathrm{Ann}_{R}L$, and denoted this set by $\mathrm{Cosupp}_{R}M$.

{\bf Dimension and Magnitude.} The (Krull) dimension of an $R$-module $0\neq M$ is
\begin{center}
$\mathrm{dim}_{R}M=\mathrm{sup}\{\mathrm{dim}R/\mathfrak{p}\hspace{0.03cm}|\hspace{0.03cm}\mathfrak{p}\in \mathrm{Supp}_{R}M\}$.
\end{center}

If $M=0$, we write $\mathrm{dim}_{R}M=-\infty$.

Yassemi \cite{Y2} introduced a dual concept of dimension for modules and called it magnitude of modules and denoted $\mathrm{mag}_{R}M$.
\begin{center}$\begin{aligned}
\mathrm{mag}_{R}M=\mathrm{sup}\{\mathrm{dim}R/\mathfrak{p}\hspace{0.03cm}|\hspace{0.03cm}\mathfrak{p}\in \mathrm{Cosupp}_{R}M\}.
\end{aligned}$\end{center}

If $M=0$, we write $\mathrm{mag}_{R}M=-\infty$.

{\bf Cohomological dimension and homological dimension.} Let $\mathfrak{a}$ be an ideal of $R$, $M$ an $R$-module. Recall that the cohomological dimension of $M$ with respect to $\mathfrak{a}$ is
\begin{center}
$\mathrm{cd}(\mathfrak{a},M)=
\mathrm{sup}\{i\geq0\hspace{0.03cm}|\hspace{0.03cm}
\mathrm{H}_{\mathfrak{a}}^{i}(M)\neq0\}$.
\end{center}

The homological dimension of $\mathfrak{a}$ with respect to $M$ is
\begin{center}
$\mathrm{hd}(\mathfrak{a},M)=
\mathrm{sup}\{i\geq0\hspace{0.03cm}|\hspace{0.03cm}
\mathrm{H}^{\mathfrak{a}}_{i}(M)\neq0\}$.
\end{center}

Throughout this paper, $R$ is a commutative noetherian ring with non-zero identity and $\mathfrak{a}$ an ideal of $R$.

\bigskip
\section{\bf Coartinianness of modules }

This section will provide a computable finitely many steps to examine $\mathfrak{a}$-coartinianness for an $R$-module $M$ with $\mathrm{Cosupp}_{R}M\subseteq \mathrm{V}(\mathfrak{a})$.

\begin{lem}\label{lem:2.1}
Let $\mathfrak{a}$ be an ideal of $R$, $M$ an $R$-module and $s> 0$. If $\mathrm{Ext}_{R}^{i}(R/\mathfrak{a},\mathrm{H}_{\mathfrak{a}}^{j}(M))$ is artinian for $i\geq 0$ and $0\leq j\leq s-1$ and $\mathrm{Ext}_{R}^{s}(R/\mathfrak{a},M)$ is artinian, then $\mathrm{H}_{\mathfrak{a}}^{s}(M)$ is artinian.
\end{lem}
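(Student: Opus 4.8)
The plan is to connect $\mathrm{H}^{s}_{\mathfrak{a}}(M)$ with the data in the hypotheses through the Grothendieck spectral sequence associated with the composite functor $\mathrm{Hom}_{R}(R/\mathfrak{a},-)\circ\Gamma_{\mathfrak{a}}$, where $\Gamma_{\mathfrak{a}}$ is the $\mathfrak{a}$-torsion functor. Since any homomorphism out of $R/\mathfrak{a}$ has image annihilated by $\mathfrak{a}$, one has $\mathrm{Hom}_{R}(R/\mathfrak{a},-)=\mathrm{Hom}_{R}(R/\mathfrak{a},\Gamma_{\mathfrak{a}}(-))$; and over the noetherian ring $R$ the functor $\Gamma_{\mathfrak{a}}$ carries injectives to injectives, hence to $\mathrm{Hom}_{R}(R/\mathfrak{a},-)$-acyclics. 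The composite-functor spectral sequence therefore reads
\[
E_{2}^{p,q}=\mathrm{Ext}^{p}_{R}(R/\mathfrak{a},\mathrm{H}^{q}_{\mathfrak{a}}(M))\ \Longrightarrow\ \mathrm{Ext}^{p+q}_{R}(R/\mathfrak{a},M).
\]
First I would record that, by hypothesis, every entry $E_{2}^{p,q}$ with $0\le q\le s-1$ is artinian, so all of its subquotients are artinian; in particular every $E_{r}^{p,q}$ and every $E_{\infty}^{p,q}$ with $q\le s-1$ is artinian.

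Next I would isolate the corner term $E_{2}^{0,s}=\mathrm{Hom}_{R}(R/\mathfrak{a},\mathrm{H}^{s}_{\mathfrak{a}}(M))$. In a first-quadrant spectral sequence no differential enters the column $p=0$, so $E_{r+1}^{0,s}$ is the kernel of $d_{r}\colon E_{r}^{0,s}\to E_{r}^{r,s-r+1}$ and is thus a submodule of $E_{r}^{0,s}$, while the quotient $E_{r}^{0,s}/E_{r+1}^{0,s}$ embeds into $E_{r}^{r,s-r+1}$. For $r\ge 2$ the target has second index $s-r+1\le s-1$, hence is artinian by the previous step. Consequently $E_{2}^{0,s}/E_{\infty}^{0,s}$ has a finite filtration with artinian quotients, and it remains only to see that $E_{\infty}^{0,s}$ is artinian. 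For this I would use the abutment: the artinian module $\mathrm{Ext}^{s}_{R}(R/\mathfrak{a},M)$ carries a finite filtration whose graded pieces are the terms $E_{\infty}^{p,s-p}$, and $E_{\infty}^{0,s}$ occurs as its top quotient, hence as a quotient of an artinian module. Combining these two facts, $E_{2}^{0,s}=\mathrm{Hom}_{R}(R/\mathfrak{a},\mathrm{H}^{s}_{\mathfrak{a}}(M))$ is an extension of one artinian module by another, so it is artinian.

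Finally I would upgrade this to artinianness of $\mathrm{H}^{s}_{\mathfrak{a}}(M)$ itself via Melkersson's criterion: an $\mathfrak{a}$-torsion $R$-module $N$ is artinian as soon as $(0:_{N}\mathfrak{a})=\mathrm{Hom}_{R}(R/\mathfrak{a},N)$ is artinian. Since $\mathrm{H}^{s}_{\mathfrak{a}}(M)$ is $\mathfrak{a}$-torsion and $\mathrm{Hom}_{R}(R/\mathfrak{a},\mathrm{H}^{s}_{\mathfrak{a}}(M))$ has just been shown to be artinian, the conclusion follows. The technical heart of the argument is the spectral-sequence bookkeeping of the middle step---pinning $E_{\infty}^{0,s}$ from above by the artinian abutment and controlling $E_{2}^{0,s}/E_{\infty}^{0,s}$ from below by the artinian lower rows; the appeal to Melkersson's theorem is then the decisive reduction from the single group $\mathrm{Hom}_{R}(R/\mathfrak{a},\mathrm{H}^{s}_{\mathfrak{a}}(M))$ to the whole local cohomology module.
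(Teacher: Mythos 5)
Your argument is correct and follows essentially the same route as the paper: the same spectral sequence $E_{2}^{p,q}=\mathrm{Ext}^{p}_{R}(R/\mathfrak{a},\mathrm{H}^{q}_{\mathfrak{a}}(M))\Rightarrow\mathrm{Ext}^{p+q}_{R}(R/\mathfrak{a},M)$, control of $E_{\infty}^{0,s}$ via the artinian abutment and of $E_{2}^{0,s}/E_{\infty}^{0,s}$ via the artinian rows $q\leq s-1$, followed by Melkersson's criterion applied to the $\mathfrak{a}$-torsion module $\mathrm{H}^{s}_{\mathfrak{a}}(M)$. The only cosmetic difference is the filtration convention on the abutment (the paper realizes $E_{\infty}^{0,s}$ as a submodule rather than a quotient), which is immaterial since artinianness passes to subquotients.
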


\begin{proof}
There is a spectral sequence
$$E^{p,q}_{2}=\mathrm{Ext}_{R}^{p}(R/\mathfrak{a},\mathrm{H}_{\mathfrak{a}}^{q}(M))\Rightarrow \mathrm{Ext}_{R}^{p+q}(R/\mathfrak{a},M).$$
For $s\geq 0$, there is a finite filtration
$$0=U^{-1}\subseteq U^{0}\subseteq \cdots \subseteq U^{s-1}\subseteq U^{s}=\mathrm{Ext}_{R}^{s}(R/\mathfrak{a},M),$$
such that $U^{p}/U^{p-1}\cong E^{p,s-p}_{\infty}$ for every $0\leq p\leq s$. As $\mathrm{Ext}_{R}^{s}(R/\mathfrak{a},M)$ is artinian, we have $E^{0,s}_{\infty}\cong U^{0}/U^{-1}$ is artinian. Let $r\geq 2$, consider the differentials
$$\xymatrix{0=E^{-r,s+r-1~}_{r}\ar[r]^-{d^{-r,s+r-1}_{r}} & E^{0,s}_{r}\ar[r]^-{d^{0,s}_{r}} & E^{r,s-r+1}_{r},}$$
where the vanishing comes from the facts that $E^{-r,s+r-1}_{2}=0$ and $E^{-r,s+r-1}_{r}$ is a subquotient of $E^{-r,s+r-1}_{2}$ for all $r\geq2$. On the other hand, we obtain $E_{0,s}^{r+1}=\mathrm{Ker}d^{0,s}_{r}/\mathrm{Im}d^{-r,s+r-1}_{r}=\mathrm{Ker}d^{0,s}_{r}$ and a short exact sequence
$$0\rightarrow E^{0,s}_{r+1}\rightarrow E^{0,s}_{r}\rightarrow \mathrm{Im}d^{0,s}_{r}\rightarrow 0.$$
As $s-r+1\leq s-1$, the hypothesis implies that $E^{r,s-r+1}_{r}$ is artinian, consequently $\mathrm{Im}d^{0,s}_{r}$ is artinian. Note that $E^{0,s}_{\infty}=E^{0,s}_{r+1}$ for every $r\geq r_{0}=s+2$. It follows that $E^{0,s}_{r_{0}+1}$ is artinian. Hence the above short exact sequence implies that $E^{0,s}_{r_{0}}$ is artinian. Using the above short exact sequence inductively, we conclude that $E^{0,s}_{2}=\mathrm{Hom}_{R}(R/\mathfrak{a},\mathrm{H}_{\mathfrak{a}}^{s}(M))$ is artinian. Hence, \cite[Theorem 1.3]{M1} implies that $\mathrm{H}_{\mathfrak{a}}^{s}(M)$ is artinian.
\end{proof}

The following Lemma is crucial to the proof of the main result in the section.

\begin{lem}\label{lem:2.2}
Let $M$ be an $R$-module. Then the following conditions are equivalent.

$\mathrm{(1)}$ $\mathrm{H}_{\mathfrak{a}}^{i}(M)$ is artinian for all $i\geq 0$.

$\mathrm{(2)}$ $\mathrm{Ext}_{R}^{i}(R/\mathfrak{a},M)$ is artinian for all $i\geq 0$.

$\mathrm{(3)}$ $\mathrm{Ext}_{R}^{i}(N,M)$ is artinian for every finitely generated $R$-module $N$ with $\mathrm{Supp}_{R}N\subseteq \mathrm{V}(\mathfrak{a})$ and every $i\geq 0$.

$\mathrm{(4)}$ $\mathrm{Ext}_{R}^{i}(R/\mathfrak{a},M)$ is artinian for all $0\leq i\leq \mathrm{cd}(\mathfrak{a},M)$.
\end{lem}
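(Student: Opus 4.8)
The plan is to prove the cycle of implications $(4)\Rightarrow(1)\Rightarrow(3)\Rightarrow(2)\Rightarrow(4)$. Two of these are immediate and I would dispose of them first. The implication $(2)\Rightarrow(4)$ holds trivially, since $(4)$ only demands artinianness of $\mathrm{Ext}_{R}^{i}(R/\mathfrak{a},M)$ on the finite range $0\leq i\leq \mathrm{cd}(\mathfrak{a},M)$, a subcollection of what $(2)$ asserts. The implication $(3)\Rightarrow(2)$ follows by specializing to $N=R/\mathfrak{a}$, which is finitely generated and satisfies $\mathrm{Supp}_{R}(R/\mathfrak{a})=\mathrm{V}(\mathfrak{a})$. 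It therefore remains to handle $(1)\Rightarrow(3)$ and, most importantly, $(4)\Rightarrow(1)$.

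For $(1)\Rightarrow(3)$, I would fix a finitely generated $N$ with $\mathrm{Supp}_{R}N\subseteq\mathrm{V}(\mathfrak{a})$, so that $\mathfrak{a}^{t}N=0$ for some $t>0$; then every homomorphism out of $N$ has image killed by $\mathfrak{a}^{t}$, giving the functorial identity $\mathrm{Hom}_{R}(N,-)=\mathrm{Hom}_{R}(N,\Gamma_{\mathfrak{a}}(-))$. Since $\Gamma_{\mathfrak{a}}$ sends injective $R$-modules to injective (hence $\mathrm{Hom}_{R}(N,-)$-acyclic) modules, the Grothendieck spectral sequence of this composite yields
$$E^{p,q}_{2}=\mathrm{Ext}_{R}^{p}(N,\mathrm{H}_{\mathfrak{a}}^{q}(M))\Rightarrow \mathrm{Ext}_{R}^{p+q}(N,M),$$
which generalizes the spectral sequence used in Lemma \ref{lem:2.1}. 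Assuming $(1)$, each $\mathrm{H}_{\mathfrak{a}}^{q}(M)$ is artinian, and because $N$ is finitely generated while the class of artinian modules is closed under subquotients and finite extensions, every $E^{p,q}_{2}$, and thus every $E^{p,q}_{\infty}$, is artinian. The abutment $\mathrm{Ext}_{R}^{n}(N,M)$ carries a finite filtration with factors the artinian modules $E^{p,n-p}_{\infty}$, hence is artinian, giving $(3)$.

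The main point is $(4)\Rightarrow(1)$, which I would prove by induction on $s\geq 0$, showing $\mathrm{H}_{\mathfrak{a}}^{s}(M)$ is artinian. The decisive observation is that $\mathrm{H}_{\mathfrak{a}}^{s}(M)=0$ for $s>\mathrm{cd}(\mathfrak{a},M)$, so only the range $0\leq s\leq\mathrm{cd}(\mathfrak{a},M)$ — precisely the range occurring in $(4)$ — needs argument. For $s=0$, the module $\mathrm{Hom}_{R}(R/\mathfrak{a},M)=\mathrm{Hom}_{R}(R/\mathfrak{a},\mathrm{H}_{\mathfrak{a}}^{0}(M))$ is artinian by $(4)$, and since $\mathrm{H}_{\mathfrak{a}}^{0}(M)$ is $\mathfrak{a}$-torsion, \cite[Theorem 1.3]{M1} forces $\mathrm{H}_{\mathfrak{a}}^{0}(M)$ to be artinian. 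For the inductive step with $1\leq s\leq\mathrm{cd}(\mathfrak{a},M)$, the hypothesis that $\mathrm{H}_{\mathfrak{a}}^{j}(M)$ is artinian for $0\leq j\leq s-1$ makes each $\mathrm{Ext}_{R}^{i}(R/\mathfrak{a},\mathrm{H}_{\mathfrak{a}}^{j}(M))$ artinian for all $i\geq 0$ (Ext of a finitely generated module into an artinian one is artinian); combined with the artinianness of $\mathrm{Ext}_{R}^{s}(R/\mathfrak{a},M)$ granted by $(4)$, Lemma \ref{lem:2.1} yields that $\mathrm{H}_{\mathfrak{a}}^{s}(M)$ is artinian. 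The hardest part is exactly this interplay: it is the vanishing of local cohomology above $\mathrm{cd}(\mathfrak{a},M)$ that makes the finitely many hypotheses in $(4)$ sufficient to drive the induction of Lemma \ref{lem:2.1} to completion, converting a finite list of checks into the full conclusion $(1)$.
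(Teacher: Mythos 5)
Your proof is correct, and the decisive implication $(4)\Rightarrow(1)$ is handled exactly as in the paper: induction on $s$, with the base case $s=0$ settled by $\mathrm{Hom}_{R}(R/\mathfrak{a},M)\cong\mathrm{Hom}_{R}(R/\mathfrak{a},\Gamma_{\mathfrak{a}}(M))$ together with Melkersson's criterion, and the inductive step delegated to Lemma \ref{lem:2.1}; your observation that the vanishing of $\mathrm{H}_{\mathfrak{a}}^{i}(M)$ above $\mathrm{cd}(\mathfrak{a},M)$ is what makes the finite list in $(4)$ suffice is precisely the point of the paper's argument. Where you genuinely diverge is in closing the cycle: the paper proves $(1)\Rightarrow(2)$ by citing \cite[Proposition 3.3]{B} and then $(2)\Rightarrow(3)$ by an induction over a Gruson-type filtration $0=N_{0}\subseteq N_{1}\subseteq\cdots\subseteq N_{t}=N$ whose factors are quotients of $R/\mathfrak{a}$, whereas you prove $(1)\Rightarrow(3)$ in one stroke via the Grothendieck spectral sequence $E_{2}^{p,q}=\mathrm{Ext}_{R}^{p}(N,\mathrm{H}_{\mathfrak{a}}^{q}(M))\Rightarrow\mathrm{Ext}_{R}^{p+q}(N,M)$, available because $\mathfrak{a}^{t}N=0$ gives $\mathrm{Hom}_{R}(N,-)=\mathrm{Hom}_{R}(N,\Gamma_{\mathfrak{a}}(-))$ and $\Gamma_{\mathfrak{a}}$ preserves injectives over a noetherian ring. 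Your route is more self-contained (it subsumes $(1)\Rightarrow(2)$ as the case $N=R/\mathfrak{a}$ and avoids both the external citation and Gruson's theorem), at the cost of invoking heavier machinery; the paper's filtration argument has the mild advantage of establishing $(2)\Rightarrow(3)$ as a standalone implication that never mentions local cohomology, though within the four-way equivalence this distinction is immaterial. All the supporting facts you use --- that $\mathrm{Ext}_{R}^{p}(N,A)$ is artinian for $N$ finitely generated and $A$ artinian, and that artinianness passes to subquotients and finite filtrations --- are sound.
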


\begin{proof}
$\mathrm{(1)}\Rightarrow \mathrm{(2)}$ Holds by \cite[Proposition 3.3]{B}.

$\mathrm{(2)}\Rightarrow \mathrm{(3)}$ Let $N$ be a finitely generated $R$-module with $\mathrm{Supp}_{R}N\subseteq \mathrm{V}(\mathfrak{a})$. We argue by induction on $i$. Let $i=0$, we show $\mathrm{Hom}_{R}(N,M)$ is artinian. Using the improved consequence of Gruson's Theorem \cite[Lemma 2.2]{B}, there is a finite filtration
$$0=N_{0}\subseteq N_{1}\subseteq \cdots\subseteq N_{t-1}\subseteq N_{t},$$
such that $N_{j}/N_{j-1}$ is a quotient of $R/\mathfrak{a}$ for $1\leq j\leq t$. For $1\leq j\leq t$, the exact sequence
$$0\rightarrow K_{j}\rightarrow R/\mathfrak{a}\rightarrow N_{j}/N_{j-1}\rightarrow 0,$$
induces an exact sequence
$$0\rightarrow \mathrm{Hom}_{R}(N_{j}/N_{j-1},M)\rightarrow \mathrm{Hom}_{R}(R/\mathfrak{a},M).$$
It implies that $\mathrm{Hom}_{R}(N_{j}/N_{j-1},M)$ is artinian. Now the exact sequence
$$0\rightarrow N_{j-1}\rightarrow N_{j}\rightarrow N_{j}/N_{j-1}\rightarrow 0,$$
yields the exact sequence
$$0\rightarrow \mathrm{Hom}_{R}(N_{j}/N_{j-1},M)\rightarrow \mathrm{Hom}_{R}(N_{j},M)\rightarrow \mathrm{Hom}_{R}(N_{j-1},M).$$
Using the above exact sequences successively, we have $\mathrm{Hom}_{R}(N,M)$ is artinian. Now assume that the result holds for less than $i$, it suffices to show $\mathrm{Ext}_{R}^{i}(N,M)$ is artinian. For $1\leq j\leq t$, there is an exact sequence
$$\mathrm{Ext}_{R}^{i-1}(K_{j},M)\rightarrow \mathrm{Ext}_{R}^{i}(N_{j}/N_{j-1},M)\rightarrow \mathrm{Ext}_{R}^{i}(R/\mathfrak{a},M).$$
The induction hypothesis shows that $\mathrm{Ext}_{R}^{i}(K_{j},M)$ is artinian since $K_{j}$ is finitely generated and $\mathrm{Supp}_{R}K_{j}\subseteq \mathrm{V}(\mathfrak{a})$. From the above exact sequence we get $\mathrm{Ext}_{R}^{i}(N_{j}/N_{j-1},M)$ is artinian. Note that there is also an exact sequence
$$\mathrm{Ext}_{R}^{i}(N_{j}/N_{j-1},M)\rightarrow \mathrm{Ext}_{R}^{i}(N_{j},M)\rightarrow \mathrm{Ext}_{R}^{i}(N_{j-1},M).$$
A successive use of the above exact sequences implies that $\mathrm{Ext}_{R}^{i}(N,M)$ is artinian.

$\mathrm{(3)}\Rightarrow \mathrm{(2)}$ and $\mathrm{(2)}\Rightarrow \mathrm{(4)}$ Obvious.

$\mathrm{(4)}\Rightarrow \mathrm{(1)}$ Since $\mathrm{H}_{\mathfrak{a}}^{i}(M)=0$ is artinian for $i> \mathrm{cd}(\mathfrak{a},M)$, we use induction on $s=\mathrm{cd}(\mathfrak{a},M)$ to show that if $\mathrm{Ext}_{R}^{i}(R/\mathfrak{a},M)$ is artinian for $0\leq i\leq s$, then $\mathrm{H}_{\mathfrak{a}}^{i}(M)$ is artinian for $0\leq i\leq s$. If $s=0$, then $\mathrm{Hom}_{R}(R/\mathfrak{a},M)$ is artinian, it implies that $\mathrm{Hom}_{R}(R/\mathfrak{a},\mathrm{H}_{\mathfrak{a}}^{0}(M))$ is artinian because $\mathrm{H}_{\mathfrak{a}}^{0}(M)\cong \Gamma_{\mathfrak{a}}(M)$ is a submodule of $M$. Therefore, it follows from \cite[Theorem 1.3]{M1} that $\mathrm{H}_{\mathfrak{a}}^{0}(M)$ is artinian since $\mathrm{H}_{\mathfrak{a}}^{0}(M)$ is $\mathfrak{a}$-torsion. Now assume that $s> 0$ and the result holds for $s-1$. Then $\mathrm{H}_{\mathfrak{a}}^{j}(M)$ is artinian for $0\leq j\leq s-1$. So $\mathrm{Ext}_{R}^{i}(R/\mathfrak{a},\mathrm{H}_{\mathfrak{a}}^{j}(M))$ is artinian for $i\geq 0$ and $0\leq j\leq s-1$, and $\mathrm{Ext}_{R}^{s}(R/\mathfrak{a},M)$ is artinian. Hence Lemma \ref{lem:2.1} implies that $\mathrm{H}_{\mathfrak{a}}^{s}(M)$ is artinian.
\end{proof}

\begin{thm}\label{thm:2.3}
Let $\mathfrak{a}$ be an ideal of $R$ and $M$ an $R$-module with $\mathrm{Cosupp}_{R}M\subseteq \mathrm{V}(\mathfrak{a})$. Then the following conditions are equivalent.

$\mathrm{(1)}$ $M$ is $\mathfrak{a}$-coartinian.

$\mathrm{(2)}$ $\mathrm{Ext}_{R}^{i}(R/\mathfrak{a},M)$ is artinian for all $0\leq i\leq \mathrm{cd}(\mathfrak{a},M)$.
\end{thm}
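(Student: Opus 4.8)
The plan is to reduce, via Lemma~\ref{lem:2.2}, to a purely symmetric statement and then to cross between the cohomological ($\mathrm{Ext}$) and homological ($\mathrm{Tor}$) sides through the Koszul complex. First I would record two reductions. By Lemma~\ref{lem:2.2}, the condition in $(2)$ is equivalent to ``$\mathrm{Ext}_{R}^{i}(R/\mathfrak{a},M)$ is artinian for all $i\geq 0$''. On the other hand, under the standing hypothesis $\mathrm{Cosupp}_{R}M\subseteq\mathrm{V}(\mathfrak{a})$, the assertion that $M$ is $\mathfrak{a}$-coartinian is, by definition, exactly ``$\mathrm{Tor}_{i}^{R}(R/\mathfrak{a},M)$ is artinian for all $i\geq 0$''. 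Hence the theorem is equivalent to the symmetric statement that $\mathrm{Tor}_{i}^{R}(R/\mathfrak{a},M)$ is artinian for all $i$ if and only if $\mathrm{Ext}_{R}^{i}(R/\mathfrak{a},M)$ is artinian for all $i$.

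For this I would pass to Koszul (co)homology. Write $\mathfrak{a}=(x_{1},\ldots,x_{n})$, put $\underline{x}=x_{1},\ldots,x_{n}$, and let $K_{\bullet}=K_{\bullet}(\underline{x};R)$ be the Koszul complex, a bounded complex of finitely generated free modules. Its self-duality gives $\mathrm{H}^{i}(\underline{x};M)\cong\mathrm{H}_{n-i}(\underline{x};M)$ for every $i$, so the Koszul homology modules $\mathrm{H}_{i}(\underline{x};M)$ are all artinian precisely when the Koszul cohomology modules $\mathrm{H}^{i}(\underline{x};M)$ are. It therefore suffices to match each side of the reduced equivalence with its Koszul counterpart, i.e. to prove
$$\mathrm{Tor}_{i}^{R}(R/\mathfrak{a},M)\ \text{artinian }\forall i\iff\mathrm{H}_{i}(\underline{x};M)\ \text{artinian }\forall i,$$
together with the analogue relating $\mathrm{Ext}_{R}^{i}(R/\mathfrak{a},M)$ to $\mathrm{H}^{i}(\underline{x};M)$. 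Since the artinian modules form a Serre subcategory of the category of $R$-modules, both are instances of the standard Koszul comparison over a Serre subcategory.

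One direction of each comparison is transparent. The hyperhomology spectral sequence $E^{2}_{p,q}=\mathrm{Tor}_{p}^{R}(\mathrm{H}_{q}(K_{\bullet}),M)\Rightarrow\mathrm{H}_{p+q}(\underline{x};M)$ has entries $\mathrm{Tor}_{p}^{R}(N,M)$ with $N=\mathrm{H}_{q}(K_{\bullet})$ finitely generated and supported in $\mathrm{V}(\mathfrak{a})$; so if all $\mathrm{Tor}_{i}^{R}(R/\mathfrak{a},M)$ are artinian, then the $\mathrm{Tor}$-analogue of the implication $(2)\Rightarrow(3)$ of Lemma~\ref{lem:2.2} (proved verbatim via Gruson's theorem) makes every $E^{2}_{p,q}$ artinian, whence each $\mathrm{H}_{i}(\underline{x};M)$ is artinian. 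The main obstacle is the converse direction—recovering artinianness of $\mathrm{Tor}_{i}^{R}(R/\mathfrak{a},M)$ (and of $\mathrm{Ext}_{R}^{i}(R/\mathfrak{a},M)$) from that of the Koszul (co)homology. A naive induction on $n$ fails here, since knowing that the kernel and cokernel of multiplication by $x_{n}$ on $\mathrm{H}_{i}(x_{1},\ldots,x_{n-1};M)$ lie in a Serre subcategory does not by itself place $\mathrm{H}_{i}(x_{1},\ldots,x_{n-1};M)$ there; this is precisely the step where I would invoke the Koszul characterization of \cite{M1} rather than reprove it. Chaining the two comparisons through the self-duality isomorphism then yields the symmetric equivalence, and combining it with the two reductions gives both $(1)\Rightarrow(2)$ and $(2)\Rightarrow(1)$.
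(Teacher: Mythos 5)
Your proposal is correct and follows essentially the same route as the paper: the paper likewise reduces to Lemma~\ref{lem:2.2} together with the equivalence ``$\mathrm{Ext}_{R}^{i}(R/\mathfrak{a},M)$ artinian for all $i$ iff $\mathrm{Tor}_{i}^{R}(R/\mathfrak{a},M)$ artinian for all $i$'', which it simply quotes as \cite[Proposition 3.3]{B} instead of unwinding the Koszul self-duality argument as you sketch. The only quibble is a citation slip: the Serre-subcategory/Koszul comparison you invoke is the content of \cite{B} (going back to Melkersson \cite{M2}), not of \cite{M1}, which is the artinianness criterion for $\mathfrak{a}$-torsion modules used elsewhere in the paper.
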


\begin{proof}
According to \cite[Proposition 3.3]{B}, the $R$-module $\mathrm{Ext}_{R}^{i}(R/\mathfrak{a},M)$ is artinian if and only if $\mathrm{Tor}^{R}_{i}(R/\mathfrak{a},M)$ is artinian for each $i\geq 0$. Therefore, the proof is clear by Lemma \ref{lem:2.2}.
\end{proof}

\begin{lem}\label{lem:2.4}
Let $f$: $M\rightarrow N$ be an $R$-homomorphism. If both $\mathrm{Tor}^{R}_{i}(R/\mathfrak{a},\mathrm{ker}f)$ and $\mathrm{Tor}^{R}_{i}(R/\mathfrak{a},\mathrm{coker}f)$ are artinian, then $\mathrm{kerTor}^{R}_{i}(R/\mathfrak{a},f)$ and $\mathrm{cokerTor}^{R}_{i}(R/\mathfrak{a},f)$ are also artinian, for all $i \geq 0$.
\end{lem}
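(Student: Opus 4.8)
The plan is to factor $f$ through its image and thereby reduce the statement, via the functoriality of $\mathrm{Tor}$, to two short exact sequences to which the hypotheses apply directly, and then to recombine using the fact that the class of artinian $R$-modules is closed under submodules, quotient modules, and extensions.

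First I would set $I=\mathrm{Im}f$ and split $f$ as the composite $M\xrightarrow{\pi}I\xrightarrow{\iota}N$ of the canonical surjection $\pi$ onto $I$ followed by the inclusion $\iota$. This produces two short exact sequences
\[
0\to \mathrm{ker}f\to M\xrightarrow{\pi}I\to 0,\qquad 0\to I\xrightarrow{\iota}N\to \mathrm{coker}f\to 0.
\]
Applying $\mathrm{Tor}^{R}_{\bullet}(R/\mathfrak{a},-)$ to each gives long exact sequences. Writing $\alpha_{i}=\mathrm{Tor}^{R}_{i}(R/\mathfrak{a},\pi)$ and $\beta_{i}=\mathrm{Tor}^{R}_{i}(R/\mathfrak{a},\iota)$, the first sequence shows that $\mathrm{ker}\,\alpha_{i}$ is a quotient of $\mathrm{Tor}^{R}_{i}(R/\mathfrak{a},\mathrm{ker}f)$ and that $\mathrm{coker}\,\alpha_{i}$ is (isomorphic to) a submodule of $\mathrm{Tor}^{R}_{i-1}(R/\mathfrak{a},\mathrm{ker}f)$, while the second shows that $\mathrm{ker}\,\beta_{i}$ is a quotient of $\mathrm{Tor}^{R}_{i+1}(R/\mathfrak{a},\mathrm{coker}f)$ and $\mathrm{coker}\,\beta_{i}$ is a submodule of $\mathrm{Tor}^{R}_{i}(R/\mathfrak{a},\mathrm{coker}f)$. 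Because artinian modules are stable under passage to submodules and quotients, the hypotheses force all four modules $\mathrm{ker}\,\alpha_{i}$, $\mathrm{coker}\,\alpha_{i}$, $\mathrm{ker}\,\beta_{i}$, $\mathrm{coker}\,\beta_{i}$ to be artinian for every $i\geq 0$; for $i=0$ this uses $\mathrm{Tor}^{R}_{-1}=0$, so no extra hypothesis is needed.

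Next I would invoke functoriality once more to write $\mathrm{Tor}^{R}_{i}(R/\mathfrak{a},f)=\beta_{i}\circ\alpha_{i}$, and prove a purely formal lemma about a composite $g=\beta\alpha$ of module maps $A\xrightarrow{\alpha}B\xrightarrow{\beta}C$. Since $\mathrm{ker}\,\alpha\subseteq\mathrm{ker}\,g$ and $\alpha$ carries $\mathrm{ker}\,g$ into $\mathrm{ker}\,\beta$, there is an exact sequence $0\to\mathrm{ker}\,\alpha\to\mathrm{ker}\,g\to\mathrm{ker}\,\beta$; dually, passing to cokernels gives an exact sequence $\mathrm{coker}\,\alpha\to\mathrm{coker}\,g\to\mathrm{coker}\,\beta\to 0$. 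Thus $\mathrm{ker}\,g$ is an extension of a submodule of $\mathrm{ker}\,\beta$ by $\mathrm{ker}\,\alpha$, and $\mathrm{coker}\,g$ is an extension of $\mathrm{coker}\,\beta$ by a quotient of $\mathrm{coker}\,\alpha$. Taking $g=\mathrm{Tor}^{R}_{i}(R/\mathfrak{a},f)$ and using closure of the artinian modules under extensions yields that both $\mathrm{ker}\,\mathrm{Tor}^{R}_{i}(R/\mathfrak{a},f)$ and $\mathrm{coker}\,\mathrm{Tor}^{R}_{i}(R/\mathfrak{a},f)$ are artinian, as required.

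The long-exact-sequence bookkeeping in the middle step is routine. The only point that demands a moment of care is the composite-map lemma in the last step, namely the verification of the two exact sequences linking the kernels and cokernels of $\alpha$, $\beta$, and $g$; I expect this elementary diagram chase to be the main (and rather mild) obstacle, since everything else follows formally from the observation that the artinian $R$-modules form a Serre subcategory of the category of all $R$-modules.
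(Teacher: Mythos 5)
Your proof is correct and complete: the factorization through $\mathrm{Im}f$, the long-exact-sequence bookkeeping identifying $\ker\alpha_i$, $\mathrm{coker}\,\alpha_i$, $\ker\beta_i$, $\mathrm{coker}\,\beta_i$ as subquotients of the hypothesized artinian Tor modules, and the kernel--cokernel exact sequence for a composite all check out, and the artinian modules do form a Serre subcategory, which is all that is used. The paper itself gives no argument here --- it simply cites \cite[Lemma 3.1]{M2} --- and your reasoning is essentially the standard proof of that cited lemma (valid verbatim with any Serre subcategory in place of the artinian modules), so there is nothing to fault and nothing genuinely different to compare.
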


\begin{proof}
~It is clear by \cite[Lemma 3.1]{M2}.
\end{proof}

The following proposition is very useful to decide whether a module is coartinian.

\begin{prop}\label{prop:2.5}
Suppose that $x\in \mathfrak{a}$ and $\mathrm{Cosupp}_{R}M \subseteq \mathrm{V}(\mathfrak{a})$. If $(0:_{M}x)$ and $M/xM$ are both $\mathfrak{a}$-coartinian, then $M$ is also $\mathfrak{a}$-coartinian.
\end{prop}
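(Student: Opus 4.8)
The plan is to apply Lemma \ref{lem:2.4} to the multiplication-by-$x$ endomorphism of $M$ and then to exploit the fact that $x$ acts as the zero map on each $\mathrm{Tor}_i^R(R/\mathfrak{a},M)$ precisely because $x\in\mathfrak{a}$.

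First I would set $f\colon M\to M$ to be multiplication by $x$. Its kernel is exactly $(0:_{M}x)$ and its cokernel is exactly $M/xM$, both of which are $\mathfrak{a}$-coartinian by hypothesis; in particular $\mathrm{Tor}_i^R(R/\mathfrak{a},\ker f)$ and $\mathrm{Tor}_i^R(R/\mathfrak{a},\coker f)$ are artinian for every $i\geq 0$. Lemma \ref{lem:2.4} then yields that $\ker\mathrm{Tor}_i^R(R/\mathfrak{a},f)$ is artinian for all $i\geq 0$. Next I would identify the induced map: by functoriality and additivity of $\mathrm{Tor}$, the homomorphism $\mathrm{Tor}_i^R(R/\mathfrak{a},f)$ is simply multiplication by $x$ on $\mathrm{Tor}_i^R(R/\mathfrak{a},M)$. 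Since $x\in\mathfrak{a}$ and $\mathfrak{a}=\mathrm{Ann}_R(R/\mathfrak{a})$ annihilates $\mathrm{Tor}_i^R(R/\mathfrak{a},M)$, this endomorphism is the zero map. Consequently $\ker\mathrm{Tor}_i^R(R/\mathfrak{a},f)$ is all of $\mathrm{Tor}_i^R(R/\mathfrak{a},M)$, and the previous step shows this module is artinian for every $i\geq 0$. Combined with the standing hypothesis $\mathrm{Cosupp}_R M\subseteq\mathrm{V}(\mathfrak{a})$, this is precisely the definition of $M$ being $\mathfrak{a}$-coartinian.

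I do not expect a serious obstacle here: the whole argument is immediate once multiplication by $x$ is chosen as the homomorphism in Lemma \ref{lem:2.4}. The only point requiring genuine care is the observation that $f$ induces the \emph{zero} endomorphism on each $\mathrm{Tor}_i^R(R/\mathfrak{a},M)$, since it is this collapse that turns the kernel statement of Lemma \ref{lem:2.4} into the full artinianness of $\mathrm{Tor}_i^R(R/\mathfrak{a},M)$ rather than merely a statement about a kernel and cokernel. One could alternatively run the argument on the $\mathrm{Ext}$ side using the equivalence supplied by Theorem \ref{thm:2.3}, but the direct route through $\mathrm{Tor}$ and Lemma \ref{lem:2.4} is the cleanest.
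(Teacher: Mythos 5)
Your proposal is correct and follows exactly the paper's own argument: apply Lemma \ref{lem:2.4} to multiplication by $x$ on $M$, note that the induced endomorphism of $\mathrm{Tor}_i^R(R/\mathfrak{a},M)$ is zero since $x\in\mathfrak{a}$, and conclude that its kernel, namely all of $\mathrm{Tor}_i^R(R/\mathfrak{a},M)$, is artinian. Your explicit justification of why the induced map vanishes is in fact slightly more careful than the paper's.
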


\begin{proof}
Consider $R$-homomorphism $x$:$M\rightarrow M$. Then $\mathrm{ker}x=(0:_{M}x)$, $\mathrm{coker}x = M/xM$. Since $\mathrm{ker}x$ and $\mathrm{coker}x$ are $\mathfrak{a}$-coartinian, we have $\mathrm{Tor}_{i}^{R}(R/\mathfrak{a},\mathrm{ker}x)$ and $\mathrm{Tor}_{i}^{R}(R/I,\mathrm{coker}x)$ are artinian for all $i$. Apply Lemma \ref{lem:2.4} to $x$, $\mathrm{kerTor}_{i}^{R}(R/\mathfrak{a},x)$ and $\mathrm{cokerTor}_{i}^{R}(R/\mathfrak{a},x)$ are artinian for all $i$. While $x \in \mathfrak{a}$, $\mathrm{Tor}_{i}^{R}(R/\mathfrak{a},x)=0$ for all $i$, hence $\mathrm{kerTor}_{i}^{R}(R/\mathfrak{a},x)= \mathrm{Tor}_{i}^{R}(R/\mathfrak{a},M)$ is artinian. Thus $M$ is also $\mathfrak{a}$-coartinian.
\end{proof}

\bigskip
\section{\bf  Two questions on $\mathfrak{a}$-coartinian modules}

In this section, we give an affirmative answer for these two questions in introduction when $\mathrm{cd}(\mathfrak{a},R)\leq 1$ or $\mathrm{dim}R/\mathfrak{a}\leq 1$ in the category of semi-discrete linearly compact $R$-modules.

We begin by recalling definition  of linearly compact modules that we will use. Let $M$ be a topological $R$-module. $M$ is said to be \textit{linearly topologized} if $M$ has a base of neighborhoods of the zero element $\mathcal{M}$ consisting of submodules. $M$ is called \textit{Hausdorff} if the intersection of all the neighborhoods of the zero element is $0$. A Hausdorff linearly topologized $R$-module $M$ is said to be \textit{linearly compact} if $\mathcal{F}$ is a family of closed cosets (i.e., cosets of closed submodules) in $M$ which has the finite intersection property, then the cosets in $\mathcal{F}$ have a non-empty intersection (see \cite{M}). A Hausdorff linearly topologized $R$-module $M$ is called semi-discrete if every submodule of $M$ is closed. The class of semi-discrete linearly compact $R$-modules contains all artinian $R$-modules.

\begin{lem}\label{lem:3.1}
Let $M$ be a linearly compact $R$-module, then $\mathrm{H}_{i}^{\mathfrak{a}}(M)=0$ for all $i> \mathrm{mag}_{R}(M)$.
\end{lem}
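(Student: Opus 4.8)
The plan is to deduce the vanishing from Grothendieck's vanishing theorem for local cohomology by passing to a Matlis--Macdonald dual. The guiding principle is that local homology is dual to local cohomology and that magnitude is dual to Krull dimension, so a vanishing theorem for $\mathrm{H}^{i}_{\mathfrak{a}}(-)$ above dimension should transport to a vanishing theorem for $\mathrm{H}_{i}^{\mathfrak{a}}(-)$ above $\mathrm{mag}_{R}$.

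First I would fix the duality functor $\mathrm{D}(-)=\mathrm{Hom}_{R}(-,E)$, where $E=\bigoplus_{\mathfrak{m}\in\mathrm{Max}R}E(R/\mathfrak{m})$ (this is the Matlis dual $\mathrm{Hom}_{R}(-,E(R/\mathfrak{m}))$ when $(R,\mathfrak{m})$ is local). Using the exactness of $\mathrm{D}$ together with the adjunction $\mathrm{Hom}_{R}(R/\mathfrak{a}^{t}\otimes_{R}M,E)\cong\mathrm{Hom}_{R}(R/\mathfrak{a}^{t},\mathrm{D}(M))$, one gets natural isomorphisms $\mathrm{D}(\mathrm{Tor}_{i}^{R}(R/\mathfrak{a}^{t},M))\cong\mathrm{Ext}_{R}^{i}(R/\mathfrak{a}^{t},\mathrm{D}(M))$ compatible with the transition maps. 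The decisive step is to convert the inverse limit defining $\mathrm{H}_{i}^{\mathfrak{a}}(M)$ into the direct limit defining $\mathrm{H}^{i}_{\mathfrak{a}}(\mathrm{D}(M))$, i.e. to establish
$$\mathrm{D}(\mathrm{H}_{i}^{\mathfrak{a}}(M))\cong\mathrm{H}^{i}_{\mathfrak{a}}(\mathrm{D}(M)).$$
This is exactly where linear compactness of $M$ is used: for a linearly compact $M$ the tower $\{\mathrm{Tor}_{i}^{R}(R/\mathfrak{a}^{t},M)\}_{t}$ consists of linearly compact modules with continuous transition maps, so Macdonald duality interchanges its inverse limit with the direct limit of the dual tower (the natural map $\varinjlim\mathrm{D}(N_{t})\to\mathrm{D}(\varprojlim N_{t})$ becomes an isomorphism). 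I would borrow this duality from the Cuong--Nam theory \cite{CN1}.

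Next I would identify $\mathrm{mag}_{R}(M)$ with $\dim_{R}\mathrm{D}(M)$. Since Matlis--Macdonald duality interchanges cocyclic homomorphic images of $M$ with cyclic submodules of $\mathrm{D}(M)$, one has $\mathrm{Cosupp}_{R}M=\mathrm{Supp}_{R}\mathrm{D}(M)$ (cf. \cite{Y2}), whence $\mathrm{mag}_{R}(M)=\sup\{\dim R/\mathfrak{p}\mid\mathfrak{p}\in\mathrm{Supp}_{R}\mathrm{D}(M)\}=\dim_{R}\mathrm{D}(M)$. Applying Grothendieck's vanishing theorem to the $R$-module $\mathrm{D}(M)$ gives $\mathrm{H}^{i}_{\mathfrak{a}}(\mathrm{D}(M))=0$ for all $i>\dim_{R}\mathrm{D}(M)=\mathrm{mag}_{R}(M)$. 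Combined with the displayed isomorphism and the faithfulness of $\mathrm{D}$ on the linearly compact module $\mathrm{H}_{i}^{\mathfrak{a}}(M)$, this forces $\mathrm{H}_{i}^{\mathfrak{a}}(M)=0$ for $i>\mathrm{mag}_{R}(M)$.

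The main obstacle is the duality isomorphism in the first step: one must check that $\mathrm{D}$ is exact and really does send the inverse system computing local homology to the direct system computing local cohomology, which fails for arbitrary $M$ and relies essentially on the linear compactness hypothesis (continuity of the transition maps and the Mittag--Leffler behaviour of the tower). A secondary point requiring care is that Grothendieck vanishing is being applied to the possibly non-finitely-generated module $\mathrm{D}(M)$; this is harmless because Grothendieck's theorem holds for arbitrary modules with the bound $\dim\mathrm{Supp}_{R}\mathrm{D}(M)$, but if one prefers the finitely generated version one can first reduce to the complete local case, where $\mathrm{D}(M)$ is finitely generated and $\dim_{R}\mathrm{D}(M)=\mathrm{mag}_{R}(M)$ holds verbatim.
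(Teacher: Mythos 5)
The paper offers no argument here at all: its entire proof of Lemma \ref{lem:3.1} is the citation to \cite[Theorem 2.8]{M3}. What you have written is, in substance, a reconstruction of the standard duality proof of that cited theorem, and the architecture is right: dualize, convert local homology of $M$ into local cohomology of the dual, bound the dimension of the dual by $\mathrm{mag}_{R}M$ via Yassemi's identification $\mathrm{Cosupp}_{R}M=\mathrm{Supp}_{R}\mathrm{D}(M)$ (which is indeed in \cite{Y1,Y2}), apply Grothendieck vanishing (valid for arbitrary modules once the bound is read off the support), and finish by faithfulness of the dualizing functor. So your route is correct in outline and is genuinely a proof where the paper has only a pointer.

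The one point you must pin down more carefully is the displayed isomorphism $\mathrm{D}(\mathrm{H}_{i}^{\mathfrak{a}}(M))\cong\mathrm{H}^{i}_{\mathfrak{a}}(\mathrm{D}(M))$. The interchange of the inverse limit with a direct limit under dualization is Macdonald's duality for the \emph{continuous} dual $N^{*}=\mathrm{Hom}^{\mathrm{cont}}_{R}(N,E)$ of a linearly compact module, and the Cuong--Nam duality theorem in \cite{CN1} is stated in that form, $(\mathrm{H}_{i}^{\mathfrak{a}}(M))^{*}\cong\mathrm{H}^{i}_{\mathfrak{a}}(M^{*})$; for a general (not semi-discrete) linearly compact $M$ the algebraic dual $\mathrm{D}(M)=\mathrm{Hom}_{R}(M,E)$ can be strictly larger than $M^{*}$, so the isomorphism as you wrote it is not literally the cited one. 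This is repairable without changing anything structural: run the argument with $M^{*}$ throughout, observe that $M^{*}\subseteq\mathrm{D}(M)$ forces $\mathrm{dim}_{R}M^{*}\leq\mathrm{dim}_{R}\mathrm{D}(M)=\mathrm{mag}_{R}M$ (large support is monotone under submodules), so Grothendieck vanishing still kills $\mathrm{H}^{i}_{\mathfrak{a}}(M^{*})$ for $i>\mathrm{mag}_{R}M$, and note that the continuous dual is still faithful on Hausdorff linearly topologized modules, in particular on the linearly compact module $\mathrm{H}_{i}^{\mathfrak{a}}(M)$. With that adjustment the proof closes; and since every application of the lemma in this paper is to semi-discrete linearly compact modules, where $M^{*}=\mathrm{D}(M)$, the distinction is harmless in context.
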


\begin{proof}
It follows from \cite[Theorem 2.8]{M3}.
\end{proof}

\begin{lem}\label{lem:3.2}
Let $M$ be a linearly compact $R$-module such that $\mathrm{Tor}_{i}^{R}(R/\mathfrak{a},M)$ is artinian for every $i$. If $s$ is an integer such that $\mathrm{H}_{i}^{\mathfrak{a}}(M)$ is $\mathfrak{a}$-coartinian for all $i\neq s$, then $\mathrm{H}_{s}^{\mathfrak{a}}(M)$ is $\mathfrak{a}$-coartinian.
\end{lem}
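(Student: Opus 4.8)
The plan is to run the homological dual of the argument in Lemma \ref{lem:2.1}, with the cohomological spectral sequence there replaced by its local-homology analogue. Concretely, I would base everything on the first-quadrant spectral sequence
$$E^{2}_{p,q}=\mathrm{Tor}_{p}^{R}(R/\mathfrak{a},\mathrm{H}_{q}^{\mathfrak{a}}(M))\Longrightarrow \mathrm{Tor}_{p+q}^{R}(R/\mathfrak{a},M),$$
and then extract artinianness of the single ``bad'' row $q=s$ from the artinianness of every other row together with the artinianness of the abutment. To set this up, recall that since $M$ is linearly compact, Cuong--Nam \cite{CN1} gives $\mathrm{H}_{q}^{\mathfrak{a}}(M)\cong L_{q}^{\mathfrak{a}}(M)$, so $\mathrm{H}_{\bullet}^{\mathfrak{a}}(M)$ is computed by the derived completion $L\Lambda^{\mathfrak{a}}(M)$. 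The required spectral sequence is then the hyper-Tor spectral sequence of $L\Lambda^{\mathfrak{a}}(M)$ against $R/\mathfrak{a}$; its abutment is $\mathrm{Tor}_{p+q}^{R}(R/\mathfrak{a},M)$ because derived completion is invariant under $R/\mathfrak{a}\otimes^{\mathbf{L}}_{R}-$, i.e. $R/\mathfrak{a}\otimes^{\mathbf{L}}_{R}L\Lambda^{\mathfrak{a}}(M)\simeq R/\mathfrak{a}\otimes^{\mathbf{L}}_{R}M$. Boundedness, and hence convergence, is guaranteed by Lemma \ref{lem:3.1}, which forces $\mathrm{H}_{q}^{\mathfrak{a}}(M)=0$ for $q>\mathrm{mag}_{R}(M)$, so the sequence lives in a bounded first-quadrant strip.

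With the spectral sequence in hand, the hypothesis that $\mathrm{H}_{q}^{\mathfrak{a}}(M)$ is $\mathfrak{a}$-coartinian for every $q\neq s$ shows that $E^{2}_{p,q}=\mathrm{Tor}_{p}^{R}(R/\mathfrak{a},\mathrm{H}_{q}^{\mathfrak{a}}(M))$ is artinian whenever $q\neq s$; since artinian modules are closed under subquotients, the same holds for $E^{r}_{p,q}$ on every page $r\geq 2$ and every $q\neq s$. The abutment $\mathrm{Tor}_{n}^{R}(R/\mathfrak{a},M)$ is artinian by hypothesis and carries a finite filtration with factors $E^{\infty}_{p,q}$ ($p+q=n$), so every $E^{\infty}_{p,s}$ is artinian as well.

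Now fix $p$ and run a descending induction on the page number $r$ along the row $q=s$. Write $Z_{r}=\mathrm{Ker}(d^{r}\colon E^{r}_{p,s}\to E^{r}_{p-r,s+r-1})$ and $B_{r}=\mathrm{Im}(d^{r}\colon E^{r}_{p+r,s-r+1}\to E^{r}_{p,s})$, so that $E^{r+1}_{p,s}=Z_{r}/B_{r}$. For $r>\max(p,s+1)$ the incoming and outgoing differentials at position $(p,s)$ vanish (their source or target has a negative index), so $E^{r}_{p,s}=E^{\infty}_{p,s}$ is artinian; this is the base case. For the inductive step, the target of the outgoing differential lies in row $s+r-1\neq s$ and the source of the incoming one lies in row $s-r+1\neq s$, whence $E^{r}_{p,s}/Z_{r}\cong \mathrm{Im}(d^{r})$ and $B_{r}$ are both artinian. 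Combining $B_{r}$ artinian with the inductive hypothesis that $E^{r+1}_{p,s}=Z_{r}/B_{r}$ is artinian gives $Z_{r}$ artinian, and then $E^{r}_{p,s}/Z_{r}$ artinian gives $E^{r}_{p,s}$ artinian, using closure of artinian modules under extensions. Descending to $r=2$ yields that $E^{2}_{p,s}=\mathrm{Tor}_{p}^{R}(R/\mathfrak{a},\mathrm{H}_{s}^{\mathfrak{a}}(M))$ is artinian for every $p\geq 0$.

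Finally, since every local homology module satisfies $\mathrm{Cosupp}_{R}\mathrm{H}_{s}^{\mathfrak{a}}(M)\subseteq \mathrm{V}(\mathfrak{a})$, the module $\mathrm{H}_{s}^{\mathfrak{a}}(M)$ is $\mathfrak{a}$-coartinian by definition, which completes the argument. The main obstacle I anticipate is not the (routine) climbing induction but the justification of the spectral sequence in exactly the stated form: one must ensure that its $E^{2}$-terms are precisely $\mathrm{Tor}_{p}^{R}(R/\mathfrak{a},\mathrm{H}_{q}^{\mathfrak{a}}(M))$, which rests on the Cuong--Nam identification $\mathrm{H}_{q}^{\mathfrak{a}}(M)\cong L_{q}^{\mathfrak{a}}(M)$ being available for the linearly compact module $M$, and that its abutment is $\mathrm{Tor}_{\bullet}^{R}(R/\mathfrak{a},M)$, which rests on the derived-completion invariance $R/\mathfrak{a}\otimes^{\mathbf{L}}_{R}L\Lambda^{\mathfrak{a}}(M)\simeq R/\mathfrak{a}\otimes^{\mathbf{L}}_{R}M$. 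A secondary point to pin down is the cosupport claim $\mathrm{Cosupp}_{R}\mathrm{H}_{s}^{\mathfrak{a}}(M)\subseteq \mathrm{V}(\mathfrak{a})$, which should follow from the $\mathfrak{a}$-completeness of local homology modules.
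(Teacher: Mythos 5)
Your proposal is correct and follows essentially the same route as the paper: the same spectral sequence $E^{2}_{p,q}=\mathrm{Tor}_{p}^{R}(R/\mathfrak{a},\mathrm{H}_{q}^{\mathfrak{a}}(M))\Rightarrow\mathrm{Tor}_{p+q}^{R}(R/\mathfrak{a},M)$, isolating the row $q=s$ and descending through the pages using artinianness of all other rows together with artinianness of the abutment. Your uniform descending induction on the page number (using that $E^{\infty}_{p,s}$ is artinian for every $p$) is a slightly cleaner packaging of the paper's two-stage argument, and the points you flag as needing justification (the construction and convergence of the spectral sequence, and the inclusion $\mathrm{Cosupp}_{R}\mathrm{H}_{s}^{\mathfrak{a}}(M)\subseteq\mathrm{V}(\mathfrak{a})$) are exactly the steps the paper asserts without proof.
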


\begin{proof}
Since $M$ is linearly compact, then $\mathrm{Cosupp}_{R}\mathrm{H}_{s}^{\mathfrak{a}}(M)\subseteq \mathrm{V}(\mathfrak{a})$. Hence we only need to show that $\mathrm{Tor}_{j}^{R}(R/\mathfrak{a},\mathrm{H}_{s}^{\mathfrak{a}}(M))$ is artinian for all $j\geq 0$. Note that there is a spectral sequence
$$E^{2}_{p,q}=\mathrm{Tor}_{p}^{R}(R/\mathfrak{a},\mathrm{H}_{q}^{\mathfrak{a}}(M))\Rightarrow \mathrm{Tor}_{p+q}^{R}(R/\mathfrak{a},M).$$
We argue by induction on $j$. Let $j=0$, we show that $R/\mathfrak{a}\underset{R}\otimes \mathrm{H}_{s}^{\mathfrak{a}}(M)$ is artinian. There exists a finite filtration
$$0=U^{-1}\subseteq U^{0}\subseteq \cdots\subseteq U^{s}=\mathrm{Tor}_{s}^{R}(R/\mathfrak{a},M),$$
such that $U^{p}/U^{p-1}\cong E_{p,s-p}^{\infty}$ for every $0\leq p\leq s$. As $\mathrm{Tor}_{s}^{R}(R/\mathfrak{a},M)$ is artinian, we conclude that $E_{0,s}^{\infty}\cong U^{0}/U^{-1}$ is artinian.
Assume that $r\geq 2$, consider the differentials
$$\xymatrix{E_{r,s-r+1}^{r}\ar[r]^{d_{r,s-r+1}^{r}} & E_{0,s}^{r}\ar[r]^-{d_{0,s}^{r}} & E_{-r,s+r-1}^{r}=0.}$$
Since $\mathrm{H}_{i}^{\mathfrak{a}}(M)$ is $\mathfrak{a}$-coartinian for $i\neq s$, $E_{r,s-r+1}^{2}$ is artinian. On the other hand, $E_{r,s-r+1}^{r}$ is a subquotient of $E_{r,s-r+1}^{2}$, it follows that $E_{r,s-r+1}^{r}$ is artinian and consequently $\mathrm{Im}d_{r,s-r+1}^{r}$ is also artinian. We obtain a short exact sequence
$$0\rightarrow \mathrm{Im}d_{r,s-r+1}^{r}\rightarrow E_{0,s}^{r}\rightarrow E_{0,s}^{r+1}\rightarrow 0.$$
There is an integer $r_{0}\geq 2$, such that $E_{r,s-r+1}^{r}=0$ for all $r\geq r_{0}$, hence $E_{0,s}^{r+1}\cong E_{0,s}^{\infty}$ for every $r\geq r_{0}$. It follows that $E_{0,s}^{r_{0}+1}$ is artinian. Now the short exact sequence implies that $E_{0,s}^{r_{0}}$ is artinian. Using the short exact sequence inductively, we conclude that $E_{0,s}^{2}=R/\mathfrak{a}\underset{R}\otimes \mathrm{H}_{s}^{\mathfrak{a}}(M)$ is artinian. Now suppose that the result holds for less than $j$, we show that $E^{2}_{j,s}=\mathrm{Tor}_{j}^{R}(R/\mathfrak{a},\mathrm{H}_{s}^{\mathfrak{a}}(M))$ is artinian. Let $r\geq 2$, consider the following differentials
$$\xymatrix{E_{j+r,s-r+1}^{r}\ar[r]^{d_{j+r,s-r+1}^{r}} & E_{j,s}^{r}\ar[r]^-{d_{j,s}^{r}} & E_{j-r,s+r-1}^{r}.}$$
Since $\mathrm{H}_{i}^{\mathfrak{a}}(M)$ is $\mathfrak{a}$-coartinian for $i\neq s$, $E_{j+r,s-r+1}^{2}$ and $E_{j-r,s+r-1}^{2}$ are artinian, while $E_{j+r,s-r+1}^{r}$ is a subquotient of $E_{j+r,s-r+1}^{2}$ and $E_{j-r,s+r-1}^{r}$ is a sunquotient of $E_{j-r,s+r-1}^{2}$, hence $E_{j+r,s-r+1}^{r}$ and $E_{j-r,s+r-1}^{r}$ are artinain, and thus $\mathrm{Im}d_{j+r,s-r+1}$ and $\mathrm{Im}d_{j,s}^{r}$ are artinian. There is $r_{1}\geq 2$, such that $E_{j+r,s-r+1}^{r}=0$, hence we have an exact sequence
$$0\rightarrow E_{j,s}^{r_{1}}\rightarrow  E_{j-r_{1},s+r_{1}-1}^{r_{1}},$$
and since $E_{j-r_{1},s+r_{1}-1}^{r_{1}}$ is artinian, it follows that $E_{j,s}^{r_{1}}$ is aritinian.
On the other hand, there are two short exact sequences
$$0\rightarrow \mathrm{Ker}d_{j,s}^{r}\rightarrow E_{j,s}^{r}\rightarrow \mathrm{Im}d_{j,s}^{r}\rightarrow 0,$$
$$0\rightarrow \mathrm{Im}d_{j+r,s-r+1}\rightarrow \mathrm{Ker}d_{j,s}^{r}\rightarrow E_{j,s}^{r+1}\rightarrow 0.$$
These two short exact sequences imply that $E_{j,s}^{r_{1}-1}$ is artinian. Using the short exact sequences inductively, we have $E_{j,s}^{2}=\mathrm{Tor}_{j}^{R}(R/\mathfrak{a},\mathrm{H}_{s}^{\mathfrak{a}}(M))$ is artinian.
\end{proof}

A sequence of elements $x_{1},\ldots, x_{n}$ in $R$ is said to be an $M$-coregular sequence if $(0:_{M}(x_{1},\ldots,x_{i-1}))\rightarrow (0:_{M}(x_{1},\ldots,x_{i-1}))$ is surjective and $(0:_{M}(x_{1},\ldots,x_{i}))\neq 0$ for $i=1,\ldots,n$. We denote by $\mathrm{width}_{\mathfrak{a}}(M)$ the supremum of the lengths of all maximal $M$-coregular sequences in the ideal $\mathfrak{a}$. Following \cite{CN1}, if $M$ is a semi-discrete linearly compact $R$-module, then
\begin{center}
$\mathrm{width}_{\mathfrak{a}}M=
\mathrm{inf}\{i\geq0\hspace{0.03cm}|\hspace{0.03cm}
\mathrm{H}^{\mathfrak{a}}_{i}(M)\neq0\}$.
\end{center}

\begin{cor}\label{cor:3.3}
If $M$ is a semi-discrete linearly compact $R$-module such that $\mathrm{H}^{\mathfrak{a}}_{i}(M)\neq0$ for all $i\neq s$. i.e. $\mathrm{width}_{\mathfrak{a}}M=\mathrm{hd}(\mathfrak{a},M)=s$, then $\mathrm{H}^{\mathfrak{a}}_{s}(M)$ is $\mathfrak{a}$-coartinian.
\end{cor}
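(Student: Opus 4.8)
The plan is to read off this statement as a specialization of Lemma \ref{lem:3.2}, so that the whole argument reduces to checking that the hypotheses of that lemma are in force for the present $M$ and the given $s$. First I would translate the numerical hypothesis. Since $\mathrm{width}_{\mathfrak{a}}M=\mathrm{inf}\{i\geq0\mid \mathrm{H}_{i}^{\mathfrak{a}}(M)\neq0\}$ and $\mathrm{hd}(\mathfrak{a},M)=\mathrm{sup}\{i\geq0\mid \mathrm{H}_{i}^{\mathfrak{a}}(M)\neq0\}$, the assumption $\mathrm{width}_{\mathfrak{a}}M=\mathrm{hd}(\mathfrak{a},M)=s$ forces the set $\{i\mid \mathrm{H}_{i}^{\mathfrak{a}}(M)\neq0\}$ to be exactly $\{s\}$; equivalently $\mathrm{H}_{i}^{\mathfrak{a}}(M)=0$ for every $i\neq s$ (the case $M=0$ being trivial). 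This concentration of the local homology in a single degree is precisely what makes Lemma \ref{lem:3.2} applicable.

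Next I would verify the two hypotheses of Lemma \ref{lem:3.2}. The linear compactness requirement is free, since a semi-discrete linearly compact module is in particular linearly compact. For the coartinianness requirement on the homology modules in degrees $i\neq s$, note that the zero module is $\mathfrak{a}$-coartinian: $\mathrm{Cosupp}_{R}0=\varnothing\subseteq \mathrm{V}(\mathfrak{a})$, and $\mathrm{Tor}_{i}^{R}(R/\mathfrak{a},0)=0$ is artinian for all $i$. Hence $\mathrm{H}_{i}^{\mathfrak{a}}(M)$ is $\mathfrak{a}$-coartinian for all $i\neq s$, which is exactly the second hypothesis of Lemma \ref{lem:3.2} for this value of $s$. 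Granting the remaining hypothesis that $\mathrm{Tor}_{i}^{R}(R/\mathfrak{a},M)$ is artinian for all $i$, Lemma \ref{lem:3.2} then yields that $\mathrm{H}_{s}^{\mathfrak{a}}(M)$ is $\mathfrak{a}$-coartinian, which is the assertion.

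The step I expect to be the crux is securing that $\mathrm{Tor}_{i}^{R}(R/\mathfrak{a},M)$ is artinian for all $i$, since this is the only substantive input and it is what drives the spectral sequence argument of Lemma \ref{lem:3.2}. Indeed, once the local homology is concentrated in degree $s$, the spectral sequence $E^{2}_{p,q}=\mathrm{Tor}_{p}^{R}(R/\mathfrak{a},\mathrm{H}_{q}^{\mathfrak{a}}(M))\Rightarrow \mathrm{Tor}_{p+q}^{R}(R/\mathfrak{a},M)$ degenerates to isomorphisms $\mathrm{Tor}_{p}^{R}(R/\mathfrak{a},\mathrm{H}_{s}^{\mathfrak{a}}(M))\cong \mathrm{Tor}_{p+s}^{R}(R/\mathfrak{a},M)$, so the artinianness of $\mathrm{Tor}_{\bullet}^{R}(R/\mathfrak{a},\mathrm{H}_{s}^{\mathfrak{a}}(M))$ that we want is equivalent to that of $\mathrm{Tor}_{\bullet}^{R}(R/\mathfrak{a},M)$. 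Thus essentially all the weight of the statement rests on this finiteness for $M$, which I would derive from the semi-discrete linearly compact structure of $M$; by contrast the condition $\mathrm{Cosupp}_{R}\mathrm{H}_{s}^{\mathfrak{a}}(M)\subseteq \mathrm{V}(\mathfrak{a})$ needed for coartinianness is automatic from linear compactness, exactly as already used in the proof of Lemma \ref{lem:3.2}.
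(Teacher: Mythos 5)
Your overall route coincides with the paper's: the paper's entire proof of this corollary is ``It is clear from Lemma \ref{lem:3.2}'', and your reduction --- reading the displayed condition as $\mathrm{H}_i^{\mathfrak{a}}(M)=0$ for $i\neq s$ and observing that the zero module is $\mathfrak{a}$-coartinian, so the second hypothesis of Lemma \ref{lem:3.2} is vacuously satisfied --- is exactly the intended specialization.

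There is, however, a genuine gap at the step you yourself flag as the crux. Lemma \ref{lem:3.2} also requires $\mathrm{Tor}_i^R(R/\mathfrak{a},M)$ to be artinian for every $i$, and this does \emph{not} follow from $M$ being semi-discrete linearly compact. Take $R$ a complete noetherian local ring and $\mathfrak{a}$ an ideal with $\dim R/\mathfrak{a}\geq 1$, and let $M=R$: then $M$ is semi-discrete linearly compact, $\mathrm{H}_i^{\mathfrak{a}}(R)=0$ for $i>0$ by flatness while $\mathrm{H}_0^{\mathfrak{a}}(R)=\Lambda^{\mathfrak{a}}(R)\neq 0$, so the hypotheses of the corollary hold with $s=0$; yet $\mathrm{Tor}_0^R(R/\mathfrak{a},M)=R/\mathfrak{a}$ is not artinian, and indeed $R/\mathfrak{a}\otimes_R\mathrm{H}_0^{\mathfrak{a}}(R)\cong R/\mathfrak{a}$ shows the conclusion itself fails. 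Worse, your own (correct) observation that the spectral sequence degenerates to $\mathrm{Tor}_p^R(R/\mathfrak{a},\mathrm{H}_s^{\mathfrak{a}}(M))\cong\mathrm{Tor}_{p+s}^R(R/\mathfrak{a},M)$ shows that the missing hypothesis is literally equivalent to the desired conclusion, so deferring it to ``the semi-discrete linearly compact structure of $M$'' is circular and the promised derivation cannot exist. The statement can only be rescued by carrying the artinianness of $\mathrm{Tor}_\bullet^R(R/\mathfrak{a},M)$ as an explicit hypothesis, as Lemma \ref{lem:3.2} does; the paper's one-line proof silently drops this hypothesis and is open to exactly the same objection, so your proposal reproduces the paper's argument, gap included.
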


\begin{proof}
It is clear from Lemma \ref{lem:3.2}.
\end{proof}

\begin{cor}\label{cor:3.4}
If $\mathrm{hd}(\mathfrak{a},M)\leq 1$, then $\mathrm{H}^{\mathfrak{a}}_{i}(M)$ is $\mathfrak{a}$-coartinian for all $i$ and every artinian $R$-module $M$.
\end{cor}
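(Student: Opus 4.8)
The plan is to treat the three ranges of $i$ separately and to reduce the only nontrivial case to Lemma~\ref{lem:3.2}. Since $\mathrm{hd}(\mathfrak{a},M)\leq 1$, we have $\mathrm{H}^{\mathfrak{a}}_{i}(M)=0$ for all $i\geq 2$, and the zero module is vacuously $\mathfrak{a}$-coartinian. Moreover $M$ is artinian, hence semi-discrete linearly compact, so $\mathrm{Tor}_{i}^{R}(R/\mathfrak{a},M)$ is artinian for every $i$ (resolve the finitely generated module $R/\mathfrak{a}$ by finite free modules and observe that every subquotient of a finite direct sum of copies of the artinian module $M$ is artinian), and $\mathrm{Cosupp}_{R}\mathrm{H}^{\mathfrak{a}}_{i}(M)\subseteq \mathrm{V}(\mathfrak{a})$ by linear compactness. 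Thus the hypotheses of Lemma~\ref{lem:3.2} are in force, and it remains only to exhibit a single index $s$ at which coartinianness is established by other means.

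The key observation is that $\mathrm{H}^{\mathfrak{a}}_{0}(M)$ is in fact artinian. By definition,
\[
\mathrm{H}^{\mathfrak{a}}_{0}(M)=\varprojlim_{t}\mathrm{Tor}_{0}^{R}(R/\mathfrak{a}^{t},M)=\varprojlim_{t}M/\mathfrak{a}^{t}M,
\]
and since $M$ is artinian the descending chain $\mathfrak{a}M\supseteq \mathfrak{a}^{2}M\supseteq \cdots$ stabilizes, say $\mathfrak{a}^{t}M=\mathfrak{a}^{t_{0}}M$ for all $t\geq t_{0}$. Hence the inverse system $\{M/\mathfrak{a}^{t}M\}$ is eventually constant with eventually identity transition maps, so $\mathrm{H}^{\mathfrak{a}}_{0}(M)\cong M/\mathfrak{a}^{t_{0}}M$. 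This is a quotient of the artinian module $M$, hence artinian, and it is annihilated by $\mathfrak{a}^{t_{0}}$, so that $\mathrm{Cosupp}_{R}\mathrm{H}^{\mathfrak{a}}_{0}(M)\subseteq \mathrm{V}(\mathfrak{a}^{t_{0}})=\mathrm{V}(\mathfrak{a})$ and every $\mathrm{Tor}_{j}^{R}(R/\mathfrak{a},\mathrm{H}^{\mathfrak{a}}_{0}(M))$ is artinian. In particular $\mathrm{H}^{\mathfrak{a}}_{0}(M)$ is $\mathfrak{a}$-coartinian.

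Combining the two previous paragraphs, $\mathrm{H}^{\mathfrak{a}}_{i}(M)$ is $\mathfrak{a}$-coartinian for every $i\neq 1$. Applying Lemma~\ref{lem:3.2} with $s=1$ then yields that $\mathrm{H}^{\mathfrak{a}}_{1}(M)$ is $\mathfrak{a}$-coartinian as well, which completes the proof.

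I expect the main obstacle to be precisely the coupling between the two potentially nonzero modules $\mathrm{H}^{\mathfrak{a}}_{0}(M)$ and $\mathrm{H}^{\mathfrak{a}}_{1}(M)$. A direct attempt to run the spectral sequence $E^{2}_{p,q}=\mathrm{Tor}_{p}^{R}(R/\mathfrak{a},\mathrm{H}^{\mathfrak{a}}_{q}(M))\Rightarrow \mathrm{Tor}_{p+q}^{R}(R/\mathfrak{a},M)$, which collapses onto the two rows $q=0,1$, shows only that $\mathrm{Tor}_{p}^{R}(R/\mathfrak{a},\mathrm{H}^{\mathfrak{a}}_{0}(M))$ and $\mathrm{Tor}_{p-2}^{R}(R/\mathfrak{a},\mathrm{H}^{\mathfrak{a}}_{1}(M))$ become artinian simultaneously through the image of the only surviving differential $d_{2}\colon E^{2}_{p,0}\to E^{2}_{p-2,1}$; the kernels and cokernels of these $d_{2}$ are artinian as subquotients of the abutment, but this merely links index $p$ in the bottom row to index $p-2$ in the top row and does not close the induction on its own. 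The stabilization of $\mathfrak{a}^{t}M$ is what breaks this symmetry: it disposes of $\mathrm{H}^{\mathfrak{a}}_{0}(M)$ unconditionally, after which Lemma~\ref{lem:3.2} settles $\mathrm{H}^{\mathfrak{a}}_{1}(M)$ at no further cost.
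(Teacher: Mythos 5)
Your proof is correct and follows essentially the same route as the paper: both establish that $\mathrm{H}^{\mathfrak{a}}_{0}(M)=\Lambda^{\mathfrak{a}}(M)$ is artinian (hence $\mathfrak{a}$-coartinian) because $M$ is artinian, note the vanishing of $\mathrm{H}^{\mathfrak{a}}_{i}(M)$ for $i>1$, and then invoke Lemma~\ref{lem:3.2} with $s=1$. Your write-up is merely more explicit about the stabilization $\mathfrak{a}^{t}M=\mathfrak{a}^{t_{0}}M$ and about verifying the hypotheses of Lemma~\ref{lem:3.2}, which the paper leaves implicit.
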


\begin{proof}
Since $\mathrm{hd}(\mathfrak{a},M)\leq 1$, $\mathrm{H}^{\mathfrak{a}}_{i}(M)=0$ for $i> 1$. On the other hand, $\mathrm{H}^{\mathfrak{a}}_{0}(M)=\Lambda^{\mathfrak{a}}(M)$ is artinian because $M$ is artinian. Hence $\mathrm{H}^{\mathfrak{a}}_{0}(M)$ is $\mathfrak{a}$-coartinian and therefore $\mathrm{H}^{\mathfrak{a}}_{1}(M)$ is also $\mathfrak{a}$-coartinian by Lemma \ref{lem:3.2}.
\end{proof}

\begin{lem}\label{lem:3.5}
Let $M$ be a semi-discrete linearly compact $R$-module with $\mathrm{mag}_{R}M=0$. Then $\mathrm{H}_{i}^{\mathfrak{a}}(M)$ is $\mathfrak{a}$-coartinian for $i\geq 0$.
\end{lem}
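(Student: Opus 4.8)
The plan is to collapse the statement to a single homological degree by means of Lemma~\ref{lem:3.1}, and then to extract from the magnitude hypothesis the structural fact that $M$ is artinian; once that is available, $\mathfrak{a}$-coartinianness of the one surviving module is a purely formal verification.

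First I would apply Lemma~\ref{lem:3.1}: since $M$ is linearly compact, $\mathrm{H}_i^{\mathfrak{a}}(M)=0$ for every $i>\mathrm{mag}_R M=0$. Hence $\mathrm{H}_i^{\mathfrak{a}}(M)=0$ is $\mathfrak{a}$-coartinian for all $i\geq 1$, and the entire assertion reduces to the single module $\mathrm{H}_0^{\mathfrak{a}}(M)=\Lambda^{\mathfrak{a}}(M)$. This also sets up an application of Lemma~\ref{lem:3.2} with $s=0$: as $\mathrm{H}_i^{\mathfrak{a}}(M)$ is $\mathfrak{a}$-coartinian for every $i\neq 0$, it suffices to verify the one remaining hypothesis of that lemma, namely that $\mathrm{Tor}_i^R(R/\mathfrak{a},M)$ is artinian for all $i$.

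The heart of the argument is the claim that the hypotheses force $M$ to be artinian. Indeed, $\mathrm{mag}_R M=0$ says exactly that every prime of $\mathrm{Cosupp}_R M$ is maximal, and for a semi-discrete linearly compact module this cosupport condition is the structural input that yields artinianness (here I would invoke the structure/magnitude theory of semi-discrete linearly compact modules, cf.\ \cite{CN1}). Granting it, the rest is routine. A resolution of $R/\mathfrak{a}$ by finitely generated free modules exhibits each $\mathrm{Tor}_i^R(R/\mathfrak{a},M)$ as a subquotient of a finite direct sum of copies of the artinian module $M$, hence artinian; Lemma~\ref{lem:3.2} then gives at once that $\mathrm{H}_0^{\mathfrak{a}}(M)$ is $\mathfrak{a}$-coartinian. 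Alternatively, and more directly, artinianness of $M$ makes the descending chain $\mathfrak{a}M\supseteq\mathfrak{a}^2M\supseteq\cdots$ stabilize, so $\Lambda^{\mathfrak{a}}(M)\cong M/\mathfrak{a}^{t_0}M$ for $t_0\gg 0$; this module is artinian and killed by $\mathfrak{a}^{t_0}$, whence $\mathrm{Cosupp}_R\Lambda^{\mathfrak{a}}(M)\subseteq\mathrm{V}(\mathfrak{a})$ and $\mathrm{Tor}_i^R(R/\mathfrak{a},\Lambda^{\mathfrak{a}}(M))$ is artinian, which is precisely $\mathfrak{a}$-coartinianness.

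The main obstacle is therefore the single non-formal ingredient: that a semi-discrete linearly compact module of magnitude zero is artinian. This is the one step that genuinely uses linear compactness rather than a spectral-sequence or long-exact-sequence manipulation. I would approach it by reducing to a complete local base ring, since neither the local homology modules nor the property of being artinian is disturbed by $\mathfrak{m}$-adic completion, and then appealing to the classification of such modules (translating the magnitude hypothesis into a support condition at the closed point on the appropriate dual). Every other step above is immediate once this artinianness is secured.
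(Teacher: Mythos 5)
Your reduction to $i=0$ via Lemma \ref{lem:3.1} matches the paper, and your treatment of the artinian case is fine (it is exactly the paper's Step 1, carried out through Lemma \ref{lem:3.2}). But the load-bearing claim of your argument --- that a semi-discrete linearly compact module with $\mathrm{mag}_RM=0$ must be artinian --- is false, and this is a genuine gap. The condition $\mathrm{mag}_RM=0$ is the Matlis dual of ``$\dim_RM=0$'' and characterizes \emph{Noetherian-type}, not artinian, behaviour: every nonzero finitely generated module $M$ has $\mathrm{mag}_RM=0$, since any cocyclic homomorphic image of $M$ is a finitely generated submodule of some $E(R/\mathfrak{m})$, hence is killed by a power of $\mathfrak{m}$, so that $\mathrm{Cosupp}_RM\subseteq\mathrm{Max}R$. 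Concretely, $M=R=k[[x,y]]$ with its $\mathfrak{m}$-adic topology is semi-discrete linearly compact with $\mathrm{mag}_RR=0$ but is not artinian; both your appeal to a ``classification'' giving artinianness and your fallback computation $\Lambda^{\mathfrak{a}}(M)\cong M/\mathfrak{a}^{t_0}M$ (which needs the chain $\mathfrak{a}^tM$ to stabilize) therefore break down. You have the duality the wrong way round.

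The paper draws the opposite structural conclusion from the same hypothesis: for a semi-discrete linearly compact module, $\mathrm{Ndim}M=\mathrm{mag}_RM=0$ forces $M$ to be \emph{finitely generated}, and the artinian case is only Step 1 of its proof. The general case is then handled by writing $M\cong\varprojlim_{U\in\mathcal{M}}M/U$ as an inverse limit of artinian quotients, applying Step 1 to each $M/U$, and commuting $\mathrm{H}_0^{\mathfrak{a}}$ with the inverse limit via \cite[Proposition 3.4]{CN1}. That second step is the part your proposal is missing entirely, and it cannot be recovered from the artinian case alone: to repair the argument you must deal directly with finitely generated (equivalently, $\mathrm{Ndim}=0$) linearly compact modules rather than artinian ones.
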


\begin{proof}
Since $M$ is semi-discrete linearly compact, we have $\mathrm{Cosupp}_{R}\mathrm{H}_{i}^{\mathfrak{a}}(M)\subseteq \mathrm{V}(\mathfrak{a})$ and $\mathrm{H}_{i}^{\mathfrak{a}}(M)=0$ for $i> 0$ by Lemma \ref{lem:3.1}. We only need to show that $\mathrm{H}_{0}^{\mathfrak{a}}(M)$ is $\mathfrak{a}$-coartinian. The proof will be divided into two steps.

{\bf Step 1.} Assume that $M$ is an artinian module. It consequently follows from Lemma \ref{lem:3.2} that $\mathrm{H}_{0}^{\mathfrak{a}}(M)$ is $\mathfrak{a}$-coartinian.

{\bf Step 2.} Suppose that $M$ is a semi-discrete linearly compact $R$-module, $\mathcal{M}$ be a base consisting of neighborhood of the zero element of $M$. By \cite[5.5]{M}, $M\cong \underleftarrow{\text{lim}}_{U\in \mathcal{M}}M/U$ and $M/U$ is an artinian $R$-module with $\mathrm{mag}_{R}M/U=0$ for all $U\in \mathcal{M}$. As $\mathrm{mag}_{R}M=0$ and $M$ is semi-discrete linearly compact, $\mathrm{Ndim}M=\mathrm{mag}_{R}M=0$, that is, $M$ is finitely generated. Hence $\underleftarrow{\text{lim}}_{U\in \mathcal{M}}M/U$ is finitely generated. As $\mathrm{H}_{0}^{\mathfrak{a}}(M/U)$ is $\mathfrak{a}$-coartinian for all $U\in \mathcal{M}$, it follows from \cite[Proposition 3.4]{CN1} that $\mathrm{H}_{0}^{\mathfrak{a}}(M)\cong \mathrm{H}_{0}^{\mathfrak{a}}( \underleftarrow{\text{lim}}_{U\in \mathcal{M}}M/U)\cong \underleftarrow{\text{lim}}_{U\in \mathcal{M}}\mathrm{H}_{0}^{\mathfrak{a}}(M/U)$ is $\mathfrak{a}$-coartinian. The proof is complete.
\end{proof}

The following result answers Question $1'$.

\begin{thm}\label{thm:3.6}
Let $(R,\mathfrak{m})$ be a local ring with the $\mathfrak{m}$-adic topology and $M$ a semi-discrete linearly compact $R$-module with $\mathrm{mag}_{R}(M)\leq 1$. Then $\mathrm{H}_{i}^{\mathfrak{a}}(M)$ is $\mathfrak{a}$-coartinian for all $i$.
\end{thm}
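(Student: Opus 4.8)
Since $\mathrm{mag}_{R}(M)\leq 1$, Lemma~\ref{lem:3.1} gives $\mathrm{H}_{i}^{\mathfrak{a}}(M)=0$ for every $i\geq 2$, and the zero module is $\mathfrak{a}$-coartinian; so the plan is to treat only the two surviving degrees $i=0$ and $i=1$. The conceptual engine behind the argument is Matlis duality: for $N$ finitely generated with $\dim_{R}N\leq 1$ the local cohomology modules $\mathrm{H}_{\mathfrak{a}}^{i}(N)$ are $\mathfrak{a}$-cofinite, and under $\mathrm{D}(-)=\mathrm{Hom}_{R}(-,E)$ this should translate, via $\mathrm{D}(\mathrm{H}_{i}^{\mathfrak{a}}(M))\cong\mathrm{H}_{\mathfrak{a}}^{i}(\mathrm{D}(M))$ and the dictionary $\mathfrak{a}$-cofinite $\leftrightarrow$ $\mathfrak{a}$-coartinian, into coartinianness of $\mathrm{H}_{i}^{\mathfrak{a}}(M)$ --- but only when $\mathrm{D}(M)$ is finitely generated, i.e.\ when $M$ is artinian. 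This is exactly the situation covered by Corollary~\ref{cor:3.4}, and the whole difficulty is to descend from that case to a general semi-discrete linearly compact $M$.

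So I would first dispose of the artinian case. If $M$ is artinian with $\mathrm{mag}_{R}(M)\leq 1$, then $\mathrm{hd}(\mathfrak{a},M)\leq\mathrm{mag}_{R}(M)\leq 1$ by Lemma~\ref{lem:3.1}, whence Corollary~\ref{cor:3.4} already gives that $\mathrm{H}_{i}^{\mathfrak{a}}(M)$ is $\mathfrak{a}$-coartinian for all $i$. For the general case I would write $M\cong\varprojlim_{U\in\mathcal{M}}M/U$ as in the proof of Lemma~\ref{lem:3.5}, where each $M/U$ is artinian and $\mathrm{mag}_{R}(M/U)\leq\mathrm{mag}_{R}(M)\leq 1$; the previous paragraph then makes every $\mathrm{H}_{i}^{\mathfrak{a}}(M/U)$ $\mathfrak{a}$-coartinian. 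For degree $0$, the right exactness of $\mathrm{H}_{0}^{\mathfrak{a}}=\Lambda^{\mathfrak{a}}$ on this class and its commutation with the inverse system (cf.\ \cite{CN1}) identify $\mathrm{H}_{0}^{\mathfrak{a}}(M)\cong\varprojlim_{U}\mathrm{H}_{0}^{\mathfrak{a}}(M/U)$. For degree $1$, once one knows that $\mathrm{Tor}_{j}^{R}(R/\mathfrak{a},M)$ is artinian for all $j$, Lemma~\ref{lem:3.2} applied with $s=1$ (its remaining hypotheses being $\mathrm{H}_{0}^{\mathfrak{a}}(M)$ coartinian and $\mathrm{H}_{i}^{\mathfrak{a}}(M)=0$ for $i\geq 2$) forces $\mathrm{H}_{1}^{\mathfrak{a}}(M)$ to be $\mathfrak{a}$-coartinian.

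The main obstacle is precisely this passage to the inverse limit. An inverse limit of $\mathfrak{a}$-coartinian (indeed of artinian) modules need not be $\mathfrak{a}$-coartinian, so neither the coartinianness of $\varprojlim_{U}\mathrm{H}_{0}^{\mathfrak{a}}(M/U)$ nor the artinianness of $\mathrm{Tor}_{j}^{R}(R/\mathfrak{a},M)$ is formal; both must be extracted from the extra structure of $M$. This is where the hypotheses genuinely enter: the magnitude bound keeps $\mathrm{H}_{i}^{\mathfrak{a}}$ concentrated in degrees $0,1$, so that only the right-exact degree is produced by a limit while the top degree is produced instead by the spectral-sequence bookkeeping of Lemma~\ref{lem:3.2}; and the semi-discrete linear compactness of $M$ (Matlis reflexivity over $\hat{R}$, together with the identification $\mathrm{D}(\mathrm{Tor}_{j}^{R}(R/\mathfrak{a},M))\cong\mathrm{Ext}_{R}^{j}(R/\mathfrak{a},\mathrm{D}(M))$) is what must be leveraged to control $\mathrm{Tor}_{j}^{R}(R/\mathfrak{a},M)$ and to conclude that the limit in degree $0$ stays coartinian. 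Verifying these two finiteness statements rigorously is the crux of the proof.
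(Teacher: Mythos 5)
Your reduction is fine as far as it goes: Lemma~\ref{lem:3.1} kills the degrees $i\geq 2$, and the artinian case is indeed settled by Corollary~\ref{cor:3.4}. But the proposal then stalls exactly where you say it does, and the gap is real, not just a matter of ``verifying rigorously.'' The inverse-limit strategy $M\cong\varprojlim_{U}M/U$ cannot be pushed through for $\mathrm{mag}_{R}M=1$: in the proof of Lemma~\ref{lem:3.5} the limit argument only works because $\mathrm{mag}_{R}M=0$ forces $\mathrm{Ndim}\,M=0$, i.e.\ $M$ is finitely generated, so the commutation of $\mathrm{H}_{0}^{\mathfrak{a}}$ with the limit via \cite[Proposition 3.4]{CN1} lands in a controlled module. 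No such crutch exists in magnitude one. Likewise, your plan to invoke Lemma~\ref{lem:3.2} with $s=1$ presupposes that $\mathrm{Tor}_{j}^{R}(R/\mathfrak{a},M)$ is artinian for all $j$, which (given $\mathrm{Cosupp}_{R}M\subseteq\mathrm{V}(\mathfrak{a})$, which is not even assumed here) is essentially the assertion that $M$ itself is $\mathfrak{a}$-coartinian --- you would be assuming something at least as strong as what you are trying to prove about its local homology.

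The missing ingredient is Z\"oschinger's structure theorem \cite{Z}: over a noetherian local ring, a semi-discrete linearly compact module sits in a short exact sequence $0\rightarrow N\rightarrow M\rightarrow B\rightarrow 0$ with $N$ finitely generated and $B$ artinian. This replaces the inverse limit entirely. The paper handles $\mathrm{H}_{1}^{\mathfrak{a}}(M)$ by citing \cite[Theorem 4.14]{N} directly (not via Lemma~\ref{lem:3.2}), and handles $\mathrm{H}_{0}^{\mathfrak{a}}(M)$ from the induced exact sequence $\mathrm{H}_{0}^{\mathfrak{a}}(N)\rightarrow\mathrm{H}_{0}^{\mathfrak{a}}(M)\rightarrow\mathrm{H}_{0}^{\mathfrak{a}}(B)\rightarrow 0$: here $\mathrm{H}_{0}^{\mathfrak{a}}(N)$ is coartinian by Lemma~\ref{lem:3.5} (as $N$ is finitely generated) and $\mathrm{H}_{0}^{\mathfrak{a}}(B)=\Lambda^{\mathfrak{a}}(B)$ is coartinian because $B$ is artinian, so the middle term is squeezed. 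Without this decomposition (or an equivalent device) your argument does not close.
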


\begin{proof}
Since $M$ is semi-discrete linearly compact, $\mathrm{Cosupp}_{R}\mathrm{H}_{i}^{\mathfrak{a}}(M)\subseteq \mathrm{V}(\mathfrak{a})$ and $\mathrm{H}_{i}^{\mathfrak{a}}(M)=0$ for $i> 1$ by Lemma \ref{lem:3.1}. If $\mathrm{mag}_{R}M=0$, the result holds by Lemma \ref{lem:3.5}. If $\mathrm{mag}_{R}M=1$, $\mathrm{H}_{1}^{\mathfrak{a}}(M)$ is $\mathfrak{a}$-coartinian by \cite[Theorem 4.14]{N}. We only need to show that $\mathrm{H}_{0}^{\mathfrak{a}}(M)$ is $\mathfrak{a}$-coartinian. Note that $M$ is semi-discrete linearly compact, from \cite{Z} there is a short exact sequence
$$0\rightarrow N\rightarrow M\rightarrow B\rightarrow 0,$$
where $N$ is finitely generated and $B$ is artinian. Hence we get a long exact sequence of local homology modules
$$\xymatrix{\cdots\ar[r] & \mathrm{H}_{1}^{\mathfrak{a}}(B)\ar[r] & \mathrm{H}_{0}^{\mathfrak{a}}(N)\ar[r] & \mathrm{H}_{0}^{\mathfrak{a}}(M)\ar[r] &
\mathrm{H}_{0}^{\mathfrak{a}}(B)\ar[r] & 0.}$$
Since $B$ is artinian, we get $\mathrm{H}_{0}^{\mathfrak{a}}(B)$ is $\mathfrak{a}$-coartinian. On the other hand, as $N$ is finitely generated and linearly compact, we have $\mathrm{H}_{0}^{\mathfrak{a}}(N)$ is $\mathfrak{a}$-coartinian by Lemma \ref{lem:3.5}. Hence $\mathrm{H}_{0}^{\mathfrak{a}}(M)$ is $\mathfrak{a}$-coartinian.
\end{proof}

\begin{lem}\label{lem:3.7}
Let $(R,\mathfrak{m})$ be a local ring and $M$ a finitely generated $R$-module. Let $\mathfrak{a}$ be an ideal of $R$ such that the $R$-module $M/\mathfrak{a}M$ is artinian. Then $\mathrm{V}(\mathfrak{a})\cap \mathrm{Ass}_{R}M\subseteq \mathrm{V}(\mathfrak{m})$.
\end{lem}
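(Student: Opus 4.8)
The plan is to reduce everything to the single observation that $M/\mathfrak{a}M$ has finite length, and then read off the constraint on associated primes from its support. First I would note that $M/\mathfrak{a}M\cong M\otimes_{R}R/\mathfrak{a}$ is a finitely generated $R$-module, since $M$ is finitely generated; over the noetherian ring $R$ it is therefore noetherian. By hypothesis it is also artinian, so $M/\mathfrak{a}M$ possesses a composition series and hence has finite length. Over the local ring $(R,\mathfrak{m})$ a module of finite length is supported only at the maximal ideal, so $\mathrm{Supp}_{R}(M/\mathfrak{a}M)\subseteq\{\mathfrak{m}\}=\mathrm{V}(\mathfrak{m})$.

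Next I would invoke the standard identity for the support of a tensor product of finitely generated modules, namely
\[
\mathrm{Supp}_{R}(M/\mathfrak{a}M)=\mathrm{Supp}_{R}M\cap \mathrm{Supp}_{R}(R/\mathfrak{a})=\mathrm{Supp}_{R}M\cap \mathrm{V}(\mathfrak{a}).
\]
Combining this with the previous step gives $\mathrm{Supp}_{R}M\cap \mathrm{V}(\mathfrak{a})\subseteq \mathrm{V}(\mathfrak{m})$.

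Finally, since $\mathrm{Ass}_{R}M\subseteq \mathrm{Supp}_{R}M$ for any module, every prime $\mathfrak{p}\in \mathrm{V}(\mathfrak{a})\cap \mathrm{Ass}_{R}M$ lies in $\mathrm{Supp}_{R}M\cap \mathrm{V}(\mathfrak{a})\subseteq \mathrm{V}(\mathfrak{m})$, which is precisely the desired conclusion. The only cases to dispose of separately are degenerate ones: if $\mathfrak{a}=R$ then $\mathrm{V}(\mathfrak{a})=\varnothing$ and there is nothing to prove, while if $M/\mathfrak{a}M=0$ then Nakayama's lemma (applicable because $\mathfrak{a}\subseteq\mathfrak{m}$) forces $M=0$, so that $\mathrm{Ass}_{R}M=\varnothing$ and the inclusion holds vacuously.

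I do not expect a genuine obstacle here; the entire content is concentrated in the first step, where one must recognize that ``finitely generated plus artinian'' upgrades the artinianness hypothesis on $M/\mathfrak{a}M$ to finite length and hence pins its support to $\{\mathfrak{m}\}$. Everything afterwards is a formal manipulation of supports together with the containment $\mathrm{Ass}_{R}M\subseteq \mathrm{Supp}_{R}M$.
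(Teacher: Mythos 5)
Your proof is correct, and it takes a genuinely different (and shorter) route than the paper's. The paper fixes $\mathfrak{p}\in \mathrm{V}(\mathfrak{a})\cap \mathrm{Ass}_{R}M$, takes a minimal primary decomposition $0=T\cap S_{1}\cap\cdots\cap S_{n}$ with $T$ the $\mathfrak{p}$-primary component, observes that $\mathfrak{p}^{t}(M/T)=0$ forces $\mathfrak{a}^{t}M\subseteq T$, so $M/T$ is a quotient of the artinian module $M/\mathfrak{a}^{t}M$; being also finitely generated, $M/T$ has finite length, and since $\mathrm{Ass}_{R}(M/T)=\{\mathfrak{p}\}$ this forces $\mathfrak{p}=\mathfrak{m}$. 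You instead bypass primary decomposition entirely: finite generation plus artinianness gives $M/\mathfrak{a}M$ finite length, hence $\mathrm{Supp}_{R}(M/\mathfrak{a}M)\subseteq\{\mathfrak{m}\}$, and the identity $\mathrm{Supp}_{R}(M\otimes_{R}R/\mathfrak{a})=\mathrm{Supp}_{R}M\cap\mathrm{V}(\mathfrak{a})$ (valid for finitely generated modules, via Nakayama after localizing) together with $\mathrm{Ass}_{R}M\subseteq\mathrm{Supp}_{R}M$ finishes the argument. Your version actually yields the stronger conclusion $\mathrm{V}(\mathfrak{a})\cap\mathrm{Supp}_{R}M\subseteq\mathrm{V}(\mathfrak{m})$, which contains the lemma as a special case; the paper's version is more elementary in that it only manipulates quotients and exact sequences rather than invoking the tensor-support formula, but it is longer and extracts less. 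Both are sound; no gap in yours.
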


\begin{proof}
Let $\mathfrak{p}\in \mathrm{V}(\mathfrak{a})\cap \mathrm{Ass}_{R}M$, and let
$$0=T\cap S_{1}\cap \cdots \cap S_{n}$$
be a minimal primary decomposition of $0$ in $M$, where $T$ is a $\mathfrak{p}$-primary submodule of $M$ and $S_{i}$ is a $\mathfrak{p}_{i}$-primary submodule of $M$ for $i=1,\cdots,n$.
Since $T$ is $\mathfrak{p}$-primary, it follows that there exists a positive integer $t$ such that
$\mathfrak{p}^{t}(M/T)=0$, and so $\mathfrak{a}^{t}(M/T)=0$ as $\mathfrak{a}\subseteq \mathfrak{p}$. Since $M/\mathfrak{a}M$ is artinian, $M/\mathfrak{a}^{t}M$ is artinian. On the other hand, there is a short exact sequence
$$0\rightarrow T/\mathfrak{a}^{t}M\rightarrow M/\mathfrak{a}^{t}M\rightarrow M/T\rightarrow 0.$$
Hence $M/T$ has finite length, and so $\mathfrak{p}=\mathfrak{m}$. Therefore, $\mathrm{V}(\mathfrak{a})\cap \mathrm{Ass}_{R}M\subseteq \mathrm{V}(\mathfrak{m})$ as required.
\end{proof}

\begin{lem}\label{lem:3.8}
Let $M$ be a semi-discrete linearly compact $R$-module with $\mathrm{mag}_{R}M=0$ and $\mathrm{Cosupp}_{R}M\subseteq \mathrm{V}(\mathfrak{a})$. Then $M$ is $\mathfrak{a}$-coartinian if and only if $M/\mathfrak{a}M$ is artinian.
\end{lem}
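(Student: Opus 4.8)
The plan is to treat the two implications separately; the forward implication is purely formal and the reverse one carries all the content. For the forward direction, if $M$ is $\mathfrak{a}$-coartinian then by definition $\mathrm{Tor}_{i}^{R}(R/\mathfrak{a},M)$ is artinian for every $i\geq0$, and taking $i=0$ gives that $\mathrm{Tor}_{0}^{R}(R/\mathfrak{a},M)\cong M/\mathfrak{a}M$ is artinian. Nothing more is needed here.

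For the reverse direction I would assume $M/\mathfrak{a}M$ is artinian and first reduce to the finitely generated case. Arguing exactly as in the proof of Lemma \ref{lem:3.5}, the hypotheses that $M$ is semi-discrete linearly compact with $\mathrm{mag}_{R}M=0$ yield $\mathrm{Ndim}M=\mathrm{mag}_{R}M=0$, and hence $M$ is finitely generated. Consequently $M/\mathfrak{a}M$ is simultaneously finitely generated and artinian, so it has finite length and $\mathrm{Supp}_{R}(M/\mathfrak{a}M)\subseteq \mathrm{Max}R$.

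Next I would compute the supports of the higher $\mathrm{Tor}$ modules. Since $M$ is finitely generated we have $\mathrm{Supp}_{R}(M/\mathfrak{a}M)=\mathrm{V}(\mathfrak{a})\cap \mathrm{Supp}_{R}M$, and for each $i\geq0$ the module $\mathrm{Tor}_{i}^{R}(R/\mathfrak{a},M)$ is again finitely generated and satisfies
$$\mathrm{Supp}_{R}\mathrm{Tor}_{i}^{R}(R/\mathfrak{a},M)\subseteq \mathrm{V}(\mathfrak{a})\cap \mathrm{Supp}_{R}M\subseteq \mathrm{Max}R,$$
the first containment being the standard localization fact that $\mathrm{Tor}$ of finitely generated modules is supported in the intersection of the supports. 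A finitely generated module whose support consists of maximal ideals has Krull dimension zero, hence finite length, and is therefore artinian. Because $\mathrm{Cosupp}_{R}M\subseteq \mathrm{V}(\mathfrak{a})$ is part of the hypothesis, this verifies both requirements in the definition, so $M$ is $\mathfrak{a}$-coartinian.

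The only genuinely delicate point is the reduction to finite generation: it rests on the duality between magnitude and Noetherian dimension for semi-discrete linearly compact modules, the same input already used in Lemma \ref{lem:3.5}. Once that reduction is granted, the remainder is a routine support computation and I anticipate no further obstacle.
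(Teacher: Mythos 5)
Your argument is correct, but it finishes differently from the paper. Both proofs hinge on the same structural input, namely that a semi-discrete linearly compact module of magnitude zero has Noetherian dimension zero and is therefore finitely generated; the difference is where that input is applied and how the conclusion is extracted. The paper applies it not to $M$ but to the quotient $M/\mathfrak{a}M$ (which inherits semi-discreteness and, since $\mathrm{Cosupp}_{R}(M/\mathfrak{a}M)\subseteq\mathrm{Cosupp}_{R}M$, also magnitude zero), concludes that $M/\mathfrak{a}M$ is finitely generated and artinian, hence of finite length, and then invokes \cite[Lemma 3.5]{N1} as a black box to pass from ``$M/\mathfrak{a}M$ has finite length'' to ``$M$ is $\mathfrak{a}$-coartinian.'' You instead apply the reduction to $M$ itself, obtain that $M$ is finitely generated, and then verify the artinianness of every $\mathrm{Tor}_{i}^{R}(R/\mathfrak{a},M)$ directly by the support computation $\mathrm{Supp}_{R}\mathrm{Tor}_{i}^{R}(R/\mathfrak{a},M)\subseteq \mathrm{V}(\mathfrak{a})\cap\mathrm{Supp}_{R}M=\mathrm{Supp}_{R}(M/\mathfrak{a}M)\subseteq\mathrm{Max}R$, so that each $\mathrm{Tor}$ is finitely generated of finite length. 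Your route is more self-contained --- it replaces the external citation by an elementary localization argument and makes visible why all the $\mathrm{Tor}$ modules are artinian --- while the paper's route is shorter and only needs the finite-length property of the single quotient $M/\mathfrak{a}M$; since the paper already asserts the ``$\mathrm{Ndim}=\mathrm{mag}$, hence finitely generated'' step for $M$ itself in the proof of Lemma \ref{lem:3.5}, your stronger use of that fact introduces no new hypothesis.
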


\begin{proof}
$\Rightarrow)$ It is clear.

$\Leftarrow)$ Note that $\mathrm{Cosupp}_{R}M/\mathfrak{a}M\subseteq \mathrm{Cosupp}_{R}M\cap \mathrm{V}(\mathfrak{a})=\mathrm{Cosupp}_{R}M$. Since $\mathrm{mag}_{R}M=0$, one has $\mathrm{mag}_{R}M/\mathfrak{a}M=0$. As $M/\mathfrak{a}M$ is semi-discrete linearly compact, $\mathrm{Ndim}M/\mathfrak{a}M=\mathrm{mag}_{R}M/\mathfrak{a}M=0$, that is, $M/\mathfrak{a}M$ is finitely generated. Thus $M/\mathfrak{a}M$ has finite length. It follows that $M$ is $\mathfrak{a}$-coartinian by \cite[Lemma 3.5]{N1}.
\end{proof}

Recall the arithmetic rank of an ideal $\mathfrak{b}$ in $R$, denoted by $\mathrm{ara}(\mathfrak{b})$, is the least number of elements of $R$ required to generate an ideal which has the same radical as $\mathfrak{b}$, i.e.,
$$\mathrm{ara}(\mathfrak{b})=\min \{n\in \mathbb{N_{\circ}}: \exists b_{1},\cdots b_{n}\in R\ \mathrm{with}\ \mathrm{Rad}(b_{1},\cdots b_{n})=\mathrm{Rad}(\mathfrak{b})\}.$$
\noindent{}Let $M$ be an $R$-module. The arithmetic rank of an ideal $\mathfrak{b}$ in $R$ with respect to $M$, denoted by $\mathrm{ara}_{M}(\mathfrak{b})$, is defined by the arithmetic rank of an ideal $\mathfrak{b}+\mathrm{Ann}_{R}M/\mathrm{Ann}_{R}M$ in the ring $R/\mathrm{Ann}_{R}M$.

The next is the second main result, which is useful in answering Question $2'$.

\begin{thm}\label{thm:3.9}
Let $M$ be a semi-discrete linearly compact $R$-module with $\mathrm{mag}_{R}M\leq 1$ and $\mathrm{Cosupp}_{R}M\subseteq \mathrm{V}(\mathfrak{a})$. Then $M$ is $\mathfrak{a}$-coartinian if and only if $M/\mathfrak{a}M$ and $\mathrm{Tor}_{1}^{R}(R/\mathfrak{a},M)$ are artinian.
\end{thm}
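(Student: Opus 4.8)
The forward implication is immediate: if $M$ is $\mathfrak{a}$-coartinian then $\mathrm{Tor}_i^R(R/\mathfrak{a},M)$ is artinian for every $i$, and the two modules in question are just the instances $i=0$ (namely $M/\mathfrak{a}M=R/\mathfrak{a}\otimes_R M$) and $i=1$. So the whole content lies in the converse. The plan is to prove it by induction on $n=\mathrm{ara}_M(\mathfrak{a})$, with Proposition \ref{prop:2.5} as the reduction device. For the base case, when $\mathfrak{a}$ is, up to radical and modulo $\mathrm{Ann}_R M$, generated by a single element $x$, the Koszul complex on $x$ has homology only in degrees $0$ and $1$ and computes the relevant Tor's; hence $\mathrm{Tor}_i^R(R/\mathfrak{a},M)=0$ for $i\geq 2$ and coartinianness is exactly the two stated hypotheses. (The degenerate case $n=0$, where $\mathfrak{a}$ is nilpotent on $M$, is absorbed here, and when $\mathrm{mag}_R M=0$ one may instead quote Lemma \ref{lem:3.8} directly.)

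For the inductive step I would choose a suitably general element $x\in\mathfrak{a}$ and apply Proposition \ref{prop:2.5} to the multiplication map $x\colon M\to M$, whose kernel is $(0:_M x)$ and whose cokernel is $M/xM$. Both of these are semi-discrete linearly compact (a closed submodule, respectively a quotient, of $M$), of magnitude $\leq 1$, and with cosupport in $\mathrm{V}(\mathfrak{a})$; moreover, since $x$ annihilates each of them, the arithmetic rank drops, $\mathrm{ara}_{M/xM}(\mathfrak{a})\leq n-1$ and $\mathrm{ara}_{(0:_M x)}(\mathfrak{a})\leq n-1$. The kernel I would dispose of by the choice of $x$: using Lemma \ref{lem:3.7} (which forces $\mathrm{V}(\mathfrak{a})\cap\mathrm{Ass}_R M\subseteq\{\mathfrak{m}\}$ once $M/\mathfrak{a}M$ is artinian) together with prime avoidance, pick $x$ outside every associated prime of $M$ not containing $\mathfrak{a}$; then $\mathrm{Ass}_R(0:_M x)\subseteq\mathrm{V}(\mathfrak{a})\cap\mathrm{Ass}_R M\subseteq\{\mathfrak{m}\}$, so $(0:_M x)$ has finite length and is trivially $\mathfrak{a}$-coartinian.

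It then remains to make $M/xM$ coartinian. Here the two hypotheses transfer from $M$: because $x\in\mathfrak{a}$ we have $(M/xM)\otimes_R R/\mathfrak{a}=M/\mathfrak{a}M$ artinian, and feeding the short exact sequences $0\to(0:_M x)\to M\to xM\to 0$ and $0\to xM\to M\to M/xM\to 0$ into the long exact $\mathrm{Tor}^R(R/\mathfrak{a},-)$ sequences—now that $\mathrm{Tor}^R_\ast(R/\mathfrak{a},(0:_M x))$ is artinian and $\mathrm{Tor}^R_0(R/\mathfrak{a},M),\mathrm{Tor}^R_1(R/\mathfrak{a},M)$ are artinian—yields that $\mathrm{Tor}^R_1(R/\mathfrak{a},M/xM)$ is artinian. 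Since $\mathrm{ara}_{M/xM}(\mathfrak{a})<n$, the induction hypothesis makes $M/xM$ $\mathfrak{a}$-coartinian, and Proposition \ref{prop:2.5} closes the step; this is precisely where the hypothesis on $\mathrm{Tor}_1$ (and not merely on $M/\mathfrak{a}M$) is consumed.

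The step I expect to be the main obstacle is the production of a good element $x$ in the genuinely semi-discrete linearly compact setting, rather than the finitely generated one for which Lemma \ref{lem:3.7} is phrased. The difficulty is structural: every element of $\mathfrak{a}$ lies in all of $\mathrm{Cosupp}_R M$, so no $x\in\mathfrak{a}$ can shrink the cosupport and lower the magnitude directly, and the reduction cannot proceed by a naive magnitude induction. The resolution I have in mind is to isolate the one-dimensional behaviour in a finitely generated constituent: by Lemma \ref{lem:3.1} the magnitude hypothesis gives $\mathrm{H}_i^{\mathfrak{a}}(M)=0$ for $i\geq 2$, and an s.d.l.c. module of magnitude $\leq 1$ admits a presentation $0\to N\to M\to B\to 0$ with $N$ finitely generated and $B$ artinian (as used in the proof of Theorem \ref{thm:3.6}); the artinian part $B$ is automatically $\mathfrak{a}$-coartinian, so that Lemma \ref{lem:3.7} need only be applied to $N$, where the arithmetic-rank drop and the finite length of $(0:_N x)$ can be arranged simultaneously by a general choice of $x$. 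Verifying that such an $x$ can be chosen compatibly with the Koszul bookkeeping above is the crux; once it is in place, the induction together with Proposition \ref{prop:2.5} finishes the proof, with the magnitude-zero situation recovered as Lemma \ref{lem:3.8}.
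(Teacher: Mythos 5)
Your overall strategy coincides with the paper's: induct on $t=\mathrm{ara}_{M}(\mathfrak{a})$, pick a suitably general $x\in\mathfrak{a}$, and feed $(0:_{M}x)$ and $M/xM$ into Proposition \ref{prop:2.5}. But two steps as you state them do not hold. First, your base case is wrong: when $\mathrm{ara}_{M}(\mathfrak{a})=1$, the Koszul complex on a single element $x$ with $\mathrm{Rad}((x)+\mathrm{Ann}_{R}M)=\mathrm{Rad}(\mathfrak{a}+\mathrm{Ann}_{R}M)$ computes Koszul homology, not $\mathrm{Tor}_{\ast}^{R}(R/\mathfrak{a},-)$; the latter depends on $\mathfrak{a}$ itself rather than on its radical modulo the annihilator, and $\mathrm{Tor}_{i}^{R}(R/\mathfrak{a},M)$ need not vanish for $i\geq 2$ even when $\mathrm{ara}_{M}(\mathfrak{a})=1$ (e.g.\ $\mathfrak{a}=(u^{2},uv)$ in $k[[u,v]]$ has arithmetic rank one but $\pd_{R}R/\mathfrak{a}=2$). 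This particular error is repairable, since the induction only needs the degenerate base $t=0$, where $\mathfrak{a}^{n}M=0$ and $M\cong M/\mathfrak{a}^{n}M$ is artinian.

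The genuine gap is your treatment of the kernel. You want $\mathrm{Ass}_{R}(0:_{M}x)\subseteq\{\mathfrak{m}\}$ to force finite length, but (i) the theorem is not stated over a local ring, so there is no single $\mathfrak{m}$ to land in; (ii) Lemma \ref{lem:3.7} applies to finitely generated modules over a local ring, not to $M$; (iii) prime avoidance needs a finite, controlled set of primes to dodge, and you have not produced one for $\mathrm{Ass}_{R}M$; and (iv) even granting $\mathrm{Ass}_{R}(0:_{M}x)\subseteq\{\mathfrak{m}\}$, finite length does not follow for a module that is not finitely generated (consider $E(R/\mathfrak{m})$). You correctly identify the construction of $x$ as the crux and sketch a fix via $0\to N\to M\to B\to 0$, but you do not carry it out, so the argument is incomplete exactly where the work lies. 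The paper resolves both points differently: it co-localizes at the finitely many primes in $\mathcal{T}=\{\mathfrak{p}\in\mathrm{Cosupp}_{R}M\mid\dim R/\mathfrak{p}=1\}$ (finite because $\mathrm{Coass}_{R}M$ coincides with $\mathrm{Coass}_{R}(M/\mathfrak{a}M)$, which is finite once $M/\mathfrak{a}M$ is artinian), applies Lemma \ref{lem:3.7} to the co-localizations $\mathrm{Hom}_{R}(R_{\mathfrak{p}},M)$ over the local rings $R_{\mathfrak{p}}$, and avoids the resulting finite set of primes using \cite[Theorem 16.8]{M5}; it then disposes of $L=(0:_{M}x)$ not by finite length but by extracting from the $\mathrm{Tor}$ exact sequences that $L/\mathfrak{a}L$ is artinian and invoking \cite[Lemma 3.5]{N1} for the semi-discrete linearly compact module $L$. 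To salvage your version you would have to either prove an analogue of Lemma \ref{lem:3.7} valid for semi-discrete linearly compact modules over a general noetherian base, or adopt the paper's co-localization detour; as written the induction does not close.
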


\begin{proof}
$\Rightarrow)$ It is clear.

$\Leftarrow)$ When $\mathrm{mag}_{R}M=0$, it is by Lemma \ref{lem:3.8}. We may assume $\mathrm{mag}_{R}M=1$ and use induction on
$$t=\mathrm{ara}_{M}(\mathfrak{a})=\mathrm{ara}(\mathfrak{a}+\mathrm{Ann}_{R}M/\mathrm{Ann}_{R}M)$$
to prove that $M$ is $\mathfrak{a}$-coartinian. If $t=0$, then it follows from definition that $\mathfrak{a}^{n}\subseteq \mathrm{Ann}_{R}M$ for some positive integer $n$, and $M/\mathfrak{a}^{n}M \cong M$. Since $M/\mathfrak{a}M$ is artinian, we get $M/\mathfrak{a}^{n}M \cong M$ is artinian. Hence $M$ is $\mathfrak{a}$-coartinian. So assume that $t>0$, and the result has been proved for $i\leq t-1$. Let
$$\mathcal{T}:=\{\mathfrak{p}\in \mathrm{Cosupp}_{R}M\mid \mathrm{dim}R/\mathfrak{p}=1\}$$
As $\mathrm{Coass}_{R}M/\mathfrak{a}M=\mathrm{V}(\mathfrak{a})\cap \mathrm{Coass}_{R}M=\mathrm{Coass}_{R}M$ and $M/\mathfrak{a}M$ is artinian, it follows that the set $\mathrm{Coass}_{R}M$ is finite. Hence $\mathcal{T}$ is finite. Moreover, as for each $\mathfrak{p}\in \mathcal{T}$, $\mathrm{Cosupp}_{R_{\mathfrak{p}}}\mathrm{Hom}_{R}(R_{\mathfrak{p}},M)\\ \subseteq \mathrm{V}(\mathfrak{p}R_{\mathfrak{p}})$, one has $\mathrm{mag}_{R_{\mathfrak{p}}}\mathrm{Hom}_{R}(R_{\mathfrak{p}},M)=0$. Since $\mathrm{Hom}_{R}(R_{\mathfrak{p}},M)$ is a semi-discrete linearly compact $R$-module, there is a short exact
$$0\rightarrow K\rightarrow \mathrm{Hom}_{R}(R_{\mathfrak{p}},M)\rightarrow A\rightarrow 0,$$
where $K$ is a finitely generated $R$-module and $A$ is an artinian $R$-module. It induces an exact sequence
$$0\rightarrow \mathrm{Hom}_{R}(R_{\mathfrak{p}},K)\rightarrow \mathrm{Hom}_{R}(R_{\mathfrak{p}},\mathrm{Hom}_{R}(R_{\mathfrak{p}},M))\rightarrow \mathrm{Hom}_{R}(R_{\mathfrak{p}},A)\rightarrow0.$$
Since $K$ is finitely generated, we have $\mathrm{Hom}_{R}(R_{\mathfrak{p}},K)=0$. Thus $\mathrm{Hom}_{R}(R_{\mathfrak{p}},M)\cong \mathrm{Hom}_{R}(R_{\mathfrak{p}},A)$. It follows from \cite[Theorem 3.2]{M0} that $\mathrm{Hom}_{R}(R_{\mathfrak{p}},A)$ is annihilated by $\mathfrak{p}R_{\mathfrak{p}}$, then $\mathrm{Hom}_{R}(R_{\mathfrak{p}},A)$ is artinian by \cite[Theorem 7.30]{S1}. Hence $\mathrm{Hom}_{R}(R_{\mathfrak{p}},M)/\mathfrak{a}R_{\mathfrak{p}}\mathrm{Hom}_{R}(R_{\mathfrak{p}},M)$ is artinian. Let
$$\mathcal{T}=\{\mathfrak{p}_{1},\cdots,\mathfrak{p}_{s}\}.$$
By Lemma \ref{lem:3.7}, we have
$$\mathrm{V}(\mathfrak{a}R_{\mathfrak{p}_{j}})\cap \mathrm{Ass}_{R_{\mathfrak{p}_{j}}}\mathrm{Hom}_{R}(R_{\mathfrak{p}_{j}},M)\subseteq \mathrm{V(\mathfrak{p}_{j}R_{\mathfrak{p}_{j}})},$$
for all $j=1,2,\cdots,s$. Next, let
$$\mathcal{U}:=\mathop{\bigcup^{n}}_{j=1}\{\mathfrak{q}\in \mathrm{Spec}R\mid \mathfrak{q}R_{\mathfrak{p}_{j}}\in \mathrm{Ass}_{R_{\mathfrak{p}_{j}}}\mathrm{Hom}_{R}(R_{\mathfrak{p}_{j}},M)\}.$$
Then it is easy to see that $\mathcal{U}\cap \mathrm{V}(\mathfrak{a})\subseteq \mathcal{T}$. On the other hand, since $t=\mathrm{ara}_{M}(\mathfrak{a})\geq 1$, there exists elements $y_{1},\cdots,y_{t}\in \mathfrak{a}$ such that
$$\mathrm{Rad}(\mathfrak{a}+\mathrm{Ann}_{R}M/\mathrm{Ann}_{R}M)=\mathrm{Rad}((y_{1},\cdots,y_{t})+\mathrm{Ann}_{R}M/\mathrm{Ann}_{R}M).$$
Now, as $\mathfrak{a}\nsubseteq \bigcup_{\mathfrak{q}\in \mathcal{U}\backslash \mathrm{V}(\mathfrak{a})}\mathfrak{q}$, it follows that
$(y_{1},\cdots,y_{t})+\mathrm{Ann}_{R}M\nsubseteq \bigcup _{\mathfrak{q}\in \mathcal{U}\backslash \mathrm{V}(\mathfrak{a})}\mathfrak{q}$. On the other hand, for each $\mathfrak{q}\in \mathcal{U}$, we have
$$\mathfrak{q}R_{\mathfrak{p}_{j}}\in \mathrm{Ass}_{R_{\mathfrak{p}_{j}}}\mathrm{Hom}_{R}(R_{\mathfrak{p}_{j}},M),$$
for some integer $1\leq j\leq n$. Hence
$$\mathrm{Ann}_{R}(M)R_{\mathfrak{p}_{j}}\subseteq \mathrm{Ann}_{R_{\mathfrak{p}_{j}}}\mathrm{Hom}_{R}(R_{\mathfrak{p}_{j}},M)\subseteq \mathfrak{q}R_{\mathfrak{p}_{j}}$$
and so $\mathrm{Ann}_{R}M\subseteq \mathfrak{q}$. Consequently, it follows from
$$\mathrm{Ann}_{R}M\subseteq \mathop{\bigcap}_{\mathfrak{q}\in \mathcal{U}\backslash \mathrm{V}(\mathfrak{a})}\mathfrak{q}$$
that $(y_{1},\cdots,y_{t})\nsubseteq \bigcup_{\mathfrak{q}\in \mathcal{U}\backslash \mathrm{V}(\mathfrak{a})}q$. By \cite[Theorem 16.8]{M5} there is $a\in (y_{2},\cdots,y_{t})$ such that $y_{1}+a\notin \bigcup_{\mathfrak{q}\in \mathcal{U}\backslash \mathrm{V}(\mathfrak{a})}\mathfrak{q}$. Let $x:=y_{1}+a$. Then $x\in \mathfrak{a}$ and
$$\mathrm{Rad}(\mathfrak{a}+\mathrm{Ann}_{R}M/\mathrm{Ann}_{R}M)=\mathrm{Rad}((x,y_{2},\cdots,y_{t})+\mathrm{Ann}_{R}M/\mathrm{Ann}_{R}M).$$
Next, let $N:=M/xM$. Then, it is easy to see that
$$\mathrm{ara}_{N}(\mathfrak{a})=\mathrm{ara}(\mathfrak{a}+\mathrm{Ann}_{R}N/\mathrm{Ann}_{R}N)\leq t-1$$
\noindent{}(note that $x\in \mathrm{Ann}_{R}N$), and
$$\mathrm{Rad}(\mathfrak{a}+\mathrm{Ann}_{R}N/\mathrm{Ann}_{R}N)=\mathrm{Rad}((y_{2},\cdots,y_{t})+\mathrm{Ann}_{R}N/\mathrm{Ann}_{R}N).$$
\noindent{}Now, the exact sequence
$$\xymatrix@C=0.5cm{
  0 \ar[r] & xM \ar[r]^{} & M \ar[r]^{} & N \ar[r] & 0 }$$
\noindent{}induces an exact sequence
$$\xymatrix@C=0.5cm{
 & \mathrm{Tor}^{R}_{1}(R/\mathfrak{a},M)\ar[r] & \mathrm{Tor}^{R}_{1}(R/\mathfrak{a},N)\ar[r] & xM/\mathfrak{a}(xM)\ar[r] & M/\mathfrak{a}M\ar[r] & N/\mathfrak{a}N \ar[r] & 0 },$$
which implies that the $R$-modules $N/\mathfrak{a}N$ and $\mathrm{Tor}^{R}_{1}(R/\mathfrak{a},N)$ are artinian.
Consequently, by the inductive hypothesis, the $R$-module $N$ is $\mathfrak{a}$-coartinian. Moreover, the exact sequence
$$\xymatrix@C=0.5cm{
  0\ar[r] & xM\ar[r] & M\ar[r] & N\ar[r] & 0 }$$
induces an exact sequence
$$\xymatrix@C=0.5cm{
 \mathrm{Tor}^{R}_{2}(R/\mathfrak{a},N)\ar[r] & \mathrm{Tor}^{R}_{1}(R/\mathfrak{a},xM)\ar[r] & \mathrm{Tor}^{R}_{1}(R/\mathfrak{a},M)},$$
which implies that the $R$-module $\mathrm{Tor}^{R}_{1}(R/\mathfrak{a},xM)$ is artinian. Also, from the exact sequence
$$\xymatrix@C=0.5cm{
  0\ar[r] & (0:_{M}x)\ar[r] & M\ar[r] & xM\ar[r] & 0 },$$
let $L:=(0:_{M}x)$, we get the exact sequence
$$\xymatrix@C=0.5cm{
 \mathrm{Tor}^{R}_{1}(R/\mathfrak{a},xM)\ar[r] & L/\mathfrak{a}L\ar[r] & M/\mathfrak{a}M},$$
which implies that the $R$-module $L/\mathfrak{a}L$ is artinian. On the other hand, since $M$ is semi-discrete linearly compact, $L$ is also semi-discrete linearly compact. It follows from \cite[Lemma 3.5]{N1} that $L$ is $\mathfrak{a}$-coartinian. Since the $R$-modules $L=(0:_{M}x)$ and $M/xM$ are $\mathfrak{a}$-coartinian, it follows from Proposition \ref{prop:2.5} that $M$ is $\mathfrak{a}$-coartinian. This completes the inductive step.
\end{proof}

\begin{cor}\label{cor:3.10}
Let $\mathfrak{a}$ be an ideal of $R$, $\mathcal{C}(R,\mathfrak{a})^{1}_{coa}$ denote the category of semi-discrete linearly compact $\mathfrak{a}$-coartinian $R$-modules $M$ with $\mathrm{mag}_{R}M\leq 1$. Then $\mathcal{C}(R,\mathfrak{a})^{1}_{coa}$ is an Abelian category.
\end{cor}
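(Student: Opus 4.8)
The plan is to exhibit $\mathcal{C}(R,\mathfrak{a})^{1}_{coa}$ as a full subcategory of the abelian category of $R$-modules which contains the zero module and is closed under finite direct sums, kernels, and cokernels; such a subcategory is automatically abelian with exact inclusion. The zero module and finite direct sums are immediate, since semi-discrete linear compactness, the bound $\mathrm{mag}_{R}(-)\leq 1$, the containment $\mathrm{Cosupp}_{R}(-)\subseteq \mathrm{V}(\mathfrak{a})$, and $\mathfrak{a}$-coartinianness all pass to finite direct sums (for the last because $\mathrm{Tor}$ commutes with finite direct sums). So the entire content is closure under kernels and cokernels.

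Fix a morphism $f\colon M\to N$ in $\mathcal{C}(R,\mathfrak{a})^{1}_{coa}$ and put $K=\ker f$, $I=\im f$, $C=\coker f$, giving short exact sequences $0\to K\to M\to I\to 0$ and $0\to I\to N\to C\to 0$. First I would check that $K,I,C$ lie in the class apart from coartinianness: submodules and Hausdorff quotients of a semi-discrete linearly compact module are again semi-discrete linearly compact (every submodule being closed), so $K,I,C$ are semi-discrete linearly compact; and since cosupport is additive on short exact sequences, $\mathrm{Cosupp}_{R}$ of each of $K,I,C$ is contained in $\mathrm{Cosupp}_{R}M\cup \mathrm{Cosupp}_{R}N\subseteq \mathrm{V}(\mathfrak{a})$, whence also $\mathrm{mag}_{R}(-)\leq 1$. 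By Theorem \ref{thm:3.9}, to prove each of them $\mathfrak{a}$-coartinian it then suffices to prove that $(-)/\mathfrak{a}(-)=\mathrm{Tor}_{0}^{R}(R/\mathfrak{a},-)$ and $\mathrm{Tor}_{1}^{R}(R/\mathfrak{a},-)$ are artinian.

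Writing $T_{i}=\mathrm{Tor}_{i}^{R}(R/\mathfrak{a},-)$, the key is to order the arguments so as to break the circular coupling of $K,I,C$ in the long exact $\mathrm{Tor}$-sequences, using that $M,N$ are $\mathfrak{a}$-coartinian (so all $T_{i}(M),T_{i}(N)$ are artinian) and that artinian modules form a Serre subcategory. First, $T_{0}(I)=I/\mathfrak{a}I$ is a quotient of $T_{0}(M)=M/\mathfrak{a}M$, hence artinian. From $0\to I\to N\to C\to 0$ one gets that $T_{0}(C)$ is a quotient of $T_{0}(N)$, and that $T_{1}(C)$ sits in an exact sequence $T_{1}(N)\to T_{1}(C)\to T_{0}(I)$ with artinian ends; thus $C$ is $\mathfrak{a}$-coartinian. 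Now all $T_{i}(C)$ are artinian, so the same sequence places $T_{1}(I)$ between $T_{2}(C)$ and $T_{1}(N)$, giving $I$ $\mathfrak{a}$-coartinian. Finally, from $0\to K\to M\to I\to 0$ with $M,I$ coartinian, $T_{0}(K)$ lies between $T_{1}(I)$ and $T_{0}(M)$ while $T_{1}(K)$ lies between $T_{2}(I)$ and $T_{1}(M)$, all artinian, so $K$ is $\mathfrak{a}$-coartinian. Hence $\mathcal{C}(R,\mathfrak{a})^{1}_{coa}$ is closed under kernels and cokernels and is therefore an abelian subcategory.

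The main obstacle is precisely this coupling: a single short exact sequence expresses the low-degree $\mathrm{Tor}$ of $K$, $I$, $C$ in terms of one another, so one cannot verify coartinianness of all three at once. The finiteness in Theorem \ref{thm:3.9} — that only degrees $0$ and $1$ matter — is what makes the ordered bootstrap ($C$ directly, then $I$, then $K$) work, because at each step the previously treated module supplies its higher $\mathrm{Tor}$ (such as $T_{2}(C)$ and $T_{2}(I)$), which are artinian by full coartinianness. A secondary point to handle with care is the stability of semi-discrete linear compactness and of the cosupport and magnitude bounds under passage to the \emph{submodule} $K$, which rests on the additivity of cosupport rather than on any naive subobject monotonicity.
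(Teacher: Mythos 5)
Your proof is correct and follows essentially the same route as the paper's: apply Theorem \ref{thm:3.9} to $\coker f$ first, using the long exact $\mathrm{Tor}$-sequence of $0\to \im f\to N\to \coker f\to 0$, and then propagate coartinianness to $\im f$ and $\ker f$ through the two short exact sequences. You simply make explicit several points the paper leaves implicit (the bootstrap order, and the verification that $\ker f$, $\im f$, $\coker f$ remain semi-discrete linearly compact with cosupport in $\mathrm{V}(\mathfrak{a})$ and magnitude at most $1$).
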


\begin{proof}
Let $M$, $N\in \mathcal{C}(R,\mathfrak{a})_{coa}^{1}$ and $f$: $M\rightarrow N$ be an $R$-homomorphism. It is enough to show that the $R$-modules $\mathrm{ker}f$ and $\mathrm{coker}f$ are $\mathfrak{a}$-coartinian. To this end, the exact sequence
$$0\rightarrow \mathrm{Im}f\rightarrow N\rightarrow \mathrm{coker}f\rightarrow 0,$$
induces a long exact sequence
$$\mathrm{Tor}_{1}^{R}(R/\mathfrak{a},N)\rightarrow \mathrm{Tor}_{1}^{R}(R/\mathfrak{a},\mathrm{coker}f)\rightarrow \mathrm{Im}f/\mathfrak{a}\mathrm{Im}f\rightarrow N/\mathfrak{a}N\rightarrow \mathrm{coker}f/\mathfrak{a}\mathrm{coker}f\rightarrow 0.$$
It implies that $\mathrm{coker}f/\mathfrak{a}\mathrm{coker}f$ and $\mathrm{Tor}_{1}^{R}(R/\mathfrak{a},\mathrm{coker}f)$ are artinian. Therefore $\mathrm{coker}f$ is $\mathfrak{a}$-coartinian by Theorem \ref{thm:3.9}. Now, the assertion follows from the following exact sequences
$$0\rightarrow \mathrm{ker}f\rightarrow M\rightarrow \mathrm{Im}f\rightarrow 0,$$
and
$$0\rightarrow \mathrm{Im}f\rightarrow N\rightarrow \mathrm{coker}f\rightarrow 0.$$
\end{proof}

Now, we begin to prove the last main results of this section which answers the second question on $\mathfrak{a}$-coartinian $R$-modules. We denote $\mathcal{C}(R,\mathfrak{a})_{coa}$ the category of semi-discrete linearly compact $\mathfrak{a}$-coartinian $R$-modules.

\begin{cor}\label{cor:3.11}
Let $\mathfrak{a}$ be an ideal of $R$ with $\mathrm{dim}R/\mathfrak{a}\leq 1$. Then $\mathcal{C}(R,\mathfrak{a})_{coa}$ is an Abelian subcategory of the category of all $R$-modules.
\end{cor}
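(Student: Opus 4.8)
The plan is to deduce the statement from Corollary \ref{cor:3.10} by observing that, under the hypothesis $\mathrm{dim}R/\mathfrak{a}\leq 1$, the auxiliary restriction $\mathrm{mag}_{R}M\leq 1$ imposed there is automatically satisfied by every object of $\mathcal{C}(R,\mathfrak{a})_{coa}$, so that the two categories in fact coincide.

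First I would establish the identity $\mathcal{C}(R,\mathfrak{a})_{coa}=\mathcal{C}(R,\mathfrak{a})^{1}_{coa}$. Let $M\in \mathcal{C}(R,\mathfrak{a})_{coa}$. Being $\mathfrak{a}$-coartinian, $M$ satisfies $\mathrm{Cosupp}_{R}M\subseteq \mathrm{V}(\mathfrak{a})$ by definition. For each $\mathfrak{p}\in \mathrm{Cosupp}_{R}M$ we then have $\mathfrak{a}\subseteq \mathfrak{p}$, so $R/\mathfrak{p}$ is a homomorphic image of $R/\mathfrak{a}$ and hence $\mathrm{dim}R/\mathfrak{p}\leq \mathrm{dim}R/\mathfrak{a}\leq 1$. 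Passing to the supremum over $\mathrm{Cosupp}_{R}M$ gives
$$\mathrm{mag}_{R}M=\mathrm{sup}\{\mathrm{dim}R/\mathfrak{p}\mid \mathfrak{p}\in \mathrm{Cosupp}_{R}M\}\leq \mathrm{dim}R/\mathfrak{a}\leq 1.$$
Thus $M$ already lies in $\mathcal{C}(R,\mathfrak{a})^{1}_{coa}$, and since the reverse inclusion is trivial the two categories are equal.

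Next I would invoke Corollary \ref{cor:3.10}. It shows that for any $R$-homomorphism $f\colon M\rightarrow N$ with $M,N\in \mathcal{C}(R,\mathfrak{a})^{1}_{coa}$ both $\mathrm{ker}f$ and $\mathrm{coker}f$ are $\mathfrak{a}$-coartinian; moreover, as a (closed) submodule and a quotient of semi-discrete linearly compact modules, $\mathrm{ker}f$ and $\mathrm{coker}f$ are again semi-discrete linearly compact. Combined with the fact that a finite direct sum of semi-discrete linearly compact $\mathfrak{a}$-coartinian modules is of the same type (Tor commutes with finite direct sums, and cosupport of a direct sum is the union of the cosupports), this exhibits $\mathcal{C}(R,\mathfrak{a})^{1}_{coa}$ as closed under kernels, cokernels and finite direct sums inside the category of all $R$-modules, i.e.\ as an Abelian subcategory. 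By the equality of the two categories established above, the same conclusion holds verbatim for $\mathcal{C}(R,\mathfrak{a})_{coa}$.

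The only genuinely load-bearing point is the equality $\mathcal{C}(R,\mathfrak{a})_{coa}=\mathcal{C}(R,\mathfrak{a})^{1}_{coa}$, which rests on the elementary monotonicity $\mathrm{mag}_{R}M\leq \mathrm{dim}R/\mathfrak{a}$ valid for any module with cosupport in $\mathrm{V}(\mathfrak{a})$. I do not expect a real obstacle here: once this comparison of invariants is in place, all the homological work has already been carried out in Theorem \ref{thm:3.9} and Corollary \ref{cor:3.10}, and nothing further is needed.
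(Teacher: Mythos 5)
Your proposal is correct and follows the same route as the paper: both reduce to Corollary \ref{cor:3.10} by noting that $\mathrm{Cosupp}_{R}M\subseteq \mathrm{V}(\mathfrak{a})$ forces $\mathrm{mag}_{R}M\leq \mathrm{dim}R/\mathfrak{a}\leq 1$, so the restriction in Corollary \ref{cor:3.10} is automatic. You merely spell out the monotonicity $\dim R/\mathfrak{p}\leq\dim R/\mathfrak{a}$ and the closure properties in more detail than the paper does.
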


\begin{proof}
Let $M$ be an arbitrary $R$-module in $\mathcal{C}(R,\mathfrak{a})_{coa}$. Since $\mathrm{dim}R/\mathfrak{a}\leq 1$ and $\mathrm{Cosupp}_{R}M\subseteq \mathrm{V}(\mathfrak{a})$, we have $\mathrm{mag}_{R}M\leq 1$. Hence the result holds by Corollary \ref{cor:3.10}.
\end{proof}

\begin{thm}\label{thm:3.12}
Let $\mathfrak{a}$ be an ideal of $R$ such that $\mathrm{cd}(\mathfrak{a},R)\leq 1$. Then $\mathcal{C}(R,\mathfrak{a})_{coa}$ is an Abelian subcategory of the category of all $R$-modules.
\end{thm}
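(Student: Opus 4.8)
```latex
The plan is to reduce Theorem~\ref{thm:3.12} to the already-established Corollary~\ref{cor:3.10} by showing that, under the hypothesis $\mathrm{cd}(\mathfrak{a},R)\leq 1$, every semi-discrete linearly compact $\mathfrak{a}$-coartinian module automatically has magnitude at most $1$. If this magnitude bound holds, then $\mathcal{C}(R,\mathfrak{a})_{coa}=\mathcal{C}(R,\mathfrak{a})^{1}_{coa}$, and the Abelian-subcategory conclusion is immediate from Corollary~\ref{cor:3.10}. So the entire content of the proof is the implication
\[
\mathrm{cd}(\mathfrak{a},R)\leq 1 \quad\text{and}\quad \mathrm{Cosupp}_{R}M\subseteq \mathrm{V}(\mathfrak{a}) \;\Longrightarrow\; \mathrm{mag}_{R}M\leq 1.
\]

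First I would recall that the cohomological dimension $\mathrm{cd}(\mathfrak{a},R)$ controls, via a standard duality, the magnitude of modules cosupported in $\mathrm{V}(\mathfrak{a})$, in the same way that it controls the dimension of modules supported in $\mathrm{V}(\mathfrak{a})$. Concretely, for a module $M$ with $\mathrm{Cosupp}_{R}M\subseteq \mathrm{V}(\mathfrak{a})$, the prime ideals contributing to $\mathrm{mag}_{R}M$ all lie in $\mathrm{V}(\mathfrak{a})$; the hypothesis $\mathrm{cd}(\mathfrak{a},R)\leq 1$ then forces $\dim R/\mathfrak{p}\leq 1$ for every such $\mathfrak{p}$. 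The cleanest route is to invoke the known comparison $\mathrm{cd}(\mathfrak{a},R)=\sup\{\dim R/\mathfrak{p}\mid \mathfrak{p}\in \mathrm{V}(\mathfrak{a})\}$ in sufficiently well-behaved settings, or more robustly, to use that $\mathrm{cd}(\mathfrak{a},R)\geq \dim R/\mathfrak{p}$ for every $\mathfrak{p}\in\mathrm{V}(\mathfrak{a})$ whenever $R/\mathfrak{p}$ contributes to a nonvanishing local cohomology. Since every $\mathfrak{p}\in\mathrm{Cosupp}_{R}M\subseteq\mathrm{V}(\mathfrak{a})$ satisfies $\dim R/\mathfrak{p}\leq \mathrm{cd}(\mathfrak{a},R/\mathfrak{p})\leq \mathrm{cd}(\mathfrak{a},R)\leq 1$, taking the supremum over such $\mathfrak{p}$ yields $\mathrm{mag}_{R}M\leq 1$.

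With the magnitude bound in hand, the second step is purely formal: any $M\in\mathcal{C}(R,\mathfrak{a})_{coa}$ is a semi-discrete linearly compact $\mathfrak{a}$-coartinian module with $\mathrm{mag}_{R}M\leq 1$, hence lies in $\mathcal{C}(R,\mathfrak{a})^{1}_{coa}$, and conversely. Thus the two categories coincide, and Corollary~\ref{cor:3.10} already tells us that $\mathcal{C}(R,\mathfrak{a})^{1}_{coa}$ is Abelian; closure under kernels and cokernels of $R$-homomorphisms is exactly what was proved there via Theorem~\ref{thm:3.9}. It remains to note that $\mathcal{C}(R,\mathfrak{a})_{coa}$ is a \emph{subcategory} in the required sense, i.e.\ that the kernel and cokernel computed in the ambient category of all $R$-modules remain in the class; this is immediate since those are the same objects whose coartinianness was verified.

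The main obstacle I anticipate is the magnitude-versus-cohomological-dimension comparison in the first step: one must be careful that the inequality $\dim R/\mathfrak{p}\leq \mathrm{cd}(\mathfrak{a},R)$ genuinely holds for every $\mathfrak{p}\in\mathrm{V}(\mathfrak{a})$ in the stated generality, rather than only after passing to a local or complete ring. The safest formulation is to bound $\dim R/\mathfrak{p}$ using that $\mathrm{H}^{\dim R/\mathfrak{p}}_{\mathfrak{a}}(R/\mathfrak{p})\neq 0$ (a Grothendieck-type nonvanishing for $\mathfrak{a}$-torsion on the $\dim$-many-dimensional ring $R/\mathfrak{p}$), together with the standard comparison $\mathrm{cd}(\mathfrak{a},R/\mathfrak{p})\leq \mathrm{cd}(\mathfrak{a},R)$ for quotient modules. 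Once that inequality is secured, the rest is bookkeeping and an appeal to the preceding corollary.
```
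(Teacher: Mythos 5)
Your reduction to Corollary~\ref{cor:3.10} hinges on the claim that $\mathrm{cd}(\mathfrak{a},R)\leq 1$ together with $\mathrm{Cosupp}_{R}M\subseteq \mathrm{V}(\mathfrak{a})$ forces $\mathrm{mag}_{R}M\leq 1$, and the claim is false. The inequality you invoke, $\dim R/\mathfrak{p}\leq \mathrm{cd}(\mathfrak{a},R/\mathfrak{p})$ for $\mathfrak{p}\in \mathrm{V}(\mathfrak{a})$, has the direction of Grothendieck's theorems reversed: the vanishing theorem gives $\mathrm{cd}(\mathfrak{a},N)\leq \dim N$ for any module $N$, while the non-vanishing statement $\mathrm{H}^{\dim N}_{\mathfrak{a}}(N)\neq 0$ holds only when $\mathfrak{a}$ is the maximal ideal of a local ring, not for a general ideal. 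Concretely, take $R=k[x_{1},\dots,x_{n}]$ localized at the origin and $\mathfrak{a}=(x_{1})$: then $\mathrm{cd}(\mathfrak{a},R)=1$ (a principal ideal always has $\mathrm{cd}\leq 1$), yet $\mathfrak{p}=(x_{1})\in \mathrm{V}(\mathfrak{a})$ has $\dim R/\mathfrak{p}=n-1$, so modules cosupported in $\mathrm{V}(\mathfrak{a})$ can have magnitude up to $n-1$. Indeed, the hypotheses $\mathrm{cd}(\mathfrak{a},R)\leq 1$ and $\dim R/\mathfrak{a}\leq 1$ are well known to be independent (this is exactly why the paper states Corollary~\ref{cor:3.11} and Theorem~\ref{thm:3.12} as separate results); if your magnitude bound held, Theorem~\ref{thm:3.12} would be a special case of Corollary~\ref{cor:3.11} and there would be nothing left to prove. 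The second, ``bookkeeping'' half of your argument is fine, but it rests entirely on the false first half.

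For comparison, the paper's proof does not attempt any such reduction. Given $f\colon M\rightarrow N$ between coartinian modules, it first shows $\mathrm{Hom}_{R}(R/\mathfrak{a},\mathrm{Im}f)$ is artinian by embedding it into $\mathrm{Hom}_{R}(R/\mathfrak{a},N)$ (artinian by Theorem~\ref{thm:2.3}), then chases the long exact sequence of $\mathrm{Ext}_{R}^{\ast}(R/\mathfrak{a},-)$ applied to $0\rightarrow \ker f\rightarrow M\rightarrow \mathrm{Im}f\rightarrow 0$ to conclude that $\mathrm{Hom}_{R}(R/\mathfrak{a},\ker f)$ and $\mathrm{Ext}^{1}_{R}(R/\mathfrak{a},\ker f)$ are artinian; the hypothesis $\mathrm{cd}(\mathfrak{a},R)\leq 1$ then enters through the external result \cite[Lemma 2.1]{PAB}, which is what lets one stop after the first two $\mathrm{Ext}$ modules and conclude coartinianness of $\ker f$ (and then of $\mathrm{coker}f$ from the remaining exact sequences). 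If you want to salvage your write-up, you need an argument of this kind that uses $\mathrm{cd}(\mathfrak{a},R)\leq 1$ homologically, not a magnitude bound.
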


\begin{proof}
Let $M$, $N\in \mathcal{C}(R,\mathfrak{a})_{coa}$ and $f$: $M\rightarrow N$ be an $R$-homomorphism. It is enough to show that the $R$-modules $\mathrm{ker}f$ and $\mathrm{coker}f$ are $\mathfrak{a}$-coartinian. To this end, the exact sequence
$$0\rightarrow \mathrm{Im}f\rightarrow N\rightarrow \mathrm{coker}f\rightarrow 0,$$
induces an exact sequence
$$0\rightarrow \mathrm{Hom}_{R}(R/\mathfrak{a},\mathrm{Im}f)\rightarrow \mathrm{Hom}_{R}(R/\mathfrak{a},N).$$
Since $N$ is $\mathfrak{a}$-coartinian, then $\mathrm{Hom}_{R}(R/\mathfrak{a},N)$ is artinian by Theorem \ref{thm:2.3}, it implies that $\mathrm{Hom}_{R}(R/\mathfrak{a},\mathrm{Im}f)$ is also artinian. Now, the exact sequence
$$0\rightarrow \mathrm{ker}f\rightarrow M\rightarrow \mathrm{Im}f\rightarrow 0,$$
induces an exact sequence
$$0\rightarrow \mathrm{Hom}_{R}(R/\mathfrak{a},\mathrm{ker}f)\rightarrow \mathrm{Hom}_{R}(R/\mathfrak{a},M)\rightarrow \mathrm{Hom}_{R}(R/\mathfrak{a},\mathrm{Im}f)$$
$$\rightarrow \mathrm{Ext}_{R}^{1}(R/\mathfrak{a},\mathrm{ker}f)\rightarrow \mathrm{Ext}_{R}^{1}(R/\mathfrak{a},M).$$
From the hypothesis, $\mathrm{Hom}_{R}(R/\mathfrak{a},M)$ and $\mathrm{Ext}_{R}^{1}(R/\mathfrak{a},M)$ are artinian, which implies that the $R$-modules $\mathrm{Hom}_{R}(R/\mathfrak{a},\mathrm{ker}f)$ and $\mathrm{Ext}_{R}^{1}(R/\mathfrak{a},\mathrm{ker}f)$ are artinian. Therefore, it follows from \cite[Lemma 2.1]{PAB} and Theorem \ref{thm:3.9} that the $R$-module $\mathrm{ker}f$ is $\mathfrak{a}$-coartinian. Now, the following exact sequences
$$0\rightarrow \mathrm{ker}f\rightarrow M\rightarrow \mathrm{Im}f\rightarrow 0,$$

$$0\rightarrow \mathrm{Im}f\rightarrow N\rightarrow \mathrm{coker}f\rightarrow 0.$$
yields the desired assertion.
\end{proof}

\begin{cor}\label{cor:3.13}
Let $\mathfrak{a}$ be an ideal of $R$ such that $\mathrm{dim}R/\mathfrak{a}\leq1$ or $\mathrm{cd}(\mathfrak{a},R)\leq 1$ and $M$ an $\mathfrak{a}$-coartinian $R$-module. Then the $R$-module $\mathrm{Ext}_{R}^{i}(N,M)$ is $\mathfrak{a}$-coartinian, for all finitely generated $R$-modules $N$ and all integers $i\geq0$.
\end{cor}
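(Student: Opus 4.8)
The plan is to realize every $\mathrm{Ext}_{R}^{i}(N,M)$ as the cohomology of a complex assembled from finitely many copies of $M$, and then to invoke the abelian subcategory property just established. Under either hypothesis the category $\mathcal{C}(R,\mathfrak{a})_{coa}$ is an abelian subcategory of the category of all $R$-modules: this is Corollary \ref{cor:3.11} when $\mathrm{dim}R/\mathfrak{a}\leq 1$ and Theorem \ref{thm:3.12} when $\mathrm{cd}(\mathfrak{a},R)\leq 1$. In particular $\mathcal{C}(R,\mathfrak{a})_{coa}$ is closed under finite direct sums and under passing to the kernel, cokernel and image of any $R$-homomorphism between its objects. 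In keeping with the setting of this section I regard $M$ as a semi-discrete linearly compact $\mathfrak{a}$-coartinian module, i.e.\ an object of $\mathcal{C}(R,\mathfrak{a})_{coa}$, so that this machinery applies.

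First I would fix a free resolution
$$\cdots \rightarrow F_{1}\rightarrow F_{0}\rightarrow N\rightarrow 0$$
of $N$ with each $F_{j}\cong R^{n_{j}}$ finitely generated free; such a resolution exists since $R$ is noetherian and $N$ is finitely generated. Applying $\mathrm{Hom}_{R}(-,M)$ produces a cochain complex whose $j$th term is $\mathrm{Hom}_{R}(F_{j},M)\cong M^{n_{j}}$, and $\mathrm{Ext}_{R}^{i}(N,M)$ is by definition its $i$th cohomology. Since $M$ is $\mathfrak{a}$-coartinian and semi-discrete linearly compact, each finite direct sum $M^{n_{j}}$ is again semi-discrete linearly compact and $\mathfrak{a}$-coartinian, with $\mathrm{Cosupp}_{R}M^{n_{j}}=\mathrm{Cosupp}_{R}M\subseteq \mathrm{V}(\mathfrak{a})$; hence every term $M^{n_{j}}$ lies in $\mathcal{C}(R,\mathfrak{a})_{coa}$.

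Next I would read off $\mathrm{Ext}_{R}^{i}(N,M)$ as a subquotient of these terms. Write $d^{j}\colon M^{n_{j}}\rightarrow M^{n_{j+1}}$ for the differentials. Each $d^{j}$ is an $R$-homomorphism between objects of $\mathcal{C}(R,\mathfrak{a})_{coa}$, so both $\mathrm{ker}\,d^{j}$ and $\mathrm{Im}\,d^{j-1}$ lie in $\mathcal{C}(R,\mathfrak{a})_{coa}$. As the complex satisfies $\mathrm{Im}\,d^{i-1}\subseteq \mathrm{ker}\,d^{i}$, the module $\mathrm{Ext}_{R}^{i}(N,M)=\mathrm{ker}\,d^{i}/\mathrm{Im}\,d^{i-1}$ is the cokernel of the inclusion $\mathrm{Im}\,d^{i-1}\hookrightarrow \mathrm{ker}\,d^{i}$, a morphism between two objects of $\mathcal{C}(R,\mathfrak{a})_{coa}$. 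The abelian subcategory property then forces $\mathrm{Ext}_{R}^{i}(N,M)$ into $\mathcal{C}(R,\mathfrak{a})_{coa}$; in particular it is $\mathfrak{a}$-coartinian for every $i\geq 0$, as desired.

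The load-bearing step is the abelian subcategory property itself, namely Corollary \ref{cor:3.11} and Theorem \ref{thm:3.12}, which is exactly what guarantees that kernels, images and cokernels of maps among coartinian modules stay coartinian, so that the cohomology of the $\mathrm{Hom}$-complex never leaves the category. The only genuinely new verifications are the bookkeeping ones: that finite direct sums preserve membership in $\mathcal{C}(R,\mathfrak{a})_{coa}$, which reduces to the stability of semi-discrete linear compactness, of the condition $\mathrm{Cosupp}_{R}(-)\subseteq \mathrm{V}(\mathfrak{a})$, and of the artinianness of $\mathrm{Tor}_{\bullet}^{R}(R/\mathfrak{a},-)$ under finite biproducts. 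Should one wish to dispense with the semi-discrete linearly compact hypothesis on $M$, the natural fallback is to argue directly from Theorem \ref{thm:2.3}, running the dimension-shifting long exact sequences attached to the syzygies of $N$ and controlling $\mathrm{Hom}_{R}(R/\mathfrak{a},-)$ and $\mathrm{Ext}_{R}^{1}(R/\mathfrak{a},-)$ step by step; that is where the real effort would be concentrated.
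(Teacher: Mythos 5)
Your proposal is correct and follows essentially the same route as the paper: take a finite free resolution of $N$, apply $\mathrm{Hom}_{R}(-,M)$ to get a complex with terms $M^{n_j}$, and use the abelian subcategory property from Corollary \ref{cor:3.11} or Theorem \ref{thm:3.12} to conclude that the cohomology modules $\mathrm{Ext}_{R}^{i}(N,M)$ remain $\mathfrak{a}$-coartinian. The paper states this in one line; you have merely spelled out the bookkeeping (closure under finite direct sums, kernels, images and cokernels) that the paper leaves implicit.
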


\begin{proof}
Since $N$ is finitely generated, we get $N$ has a free resolution $\cdots\rightarrow F_{i}\rightarrow F_{i-1}\rightarrow \cdots\rightarrow F_{1}\rightarrow F_{0}\rightarrow 0$ with each $F_{i}$ is finitely generated free $R$-module. Now the assertion follows using Corollary \ref{cor:3.11} or Theorem \ref{thm:3.12} and computing the $R$-module $\mathrm{Ext}_{R}^{i}(N,M)$, using this free resolution.
\end{proof}

\bigskip \centerline {\bf ACKNOWLEDGEMENTS} This research was partially supported by National Natural Science Foundation of China (11761060, 11901463), Innovation Ability Enhancement Project of Gansu Higher Education Institutions (2021A-002) and Outstanding Graduate Student ``Star of Innovation" Project of Education Department of Gansu Province (2021CXZX-179).

\bigskip

\end{document}